\theoremstyle{plain}
\newtheorem*{theorem*}{Theorem}
\newtheorem{theorem}{Theorem}[section]
\newtheorem{lemma}[theorem]{Lemma}
\theoremstyle{definition}
\newtheorem{definition}[theorem]{Definition}
\newtheorem {proposition} {Proposition}
\newcommand{\R}{\mathbb{ R}}
\newcommand{\C}{\mathbb{ C}}
\newcommand{\Z}{\mathbb{ Z}}
\renewcommand{\H}{\mathbb{ H}}
\renewcommand{\P}{\mathbb{ P}}
\newcommand{\HP}{\H\P}
\newcommand{\CP}{\C\P}
\DeclareMathOperator{\tr}{tr}
\newcommand{\leftsuperscript}[2]{{\vphantom{#2}}^{#1}{#2}}
\newcommand{\residue}{\mathrm{Res}}
\begin{document}
\title
{Harmonic Maps and Integrable Systems}
\author{Emma Carberry}
\date{\today}
\maketitle

\begin {abstract}
This article has two purposes. The first is to give an expository account of the integrable systems approach to harmonic maps from surfaces to Lie groups and symmetric spaces, focusing on spectral curves for harmonic 2-tori. 
The most unwieldy aspect of the spectral curve description is the periodicity conditions and the second aim is to present four different forms for these periodicity conditions and explain their equivalence. 
\end {abstract}

\section {Introduction}
%
%

Harmonic maps from a Riemann surface $\Sigma $ into a Lie group $ G $ or symmetric space $ G/K $ have been widely studied by expressing the harmonic map in terms of a loop of flat connections in a principal $ G$-bundle over $\Sigma$. When the domain is a 2-torus, there has been success, for a number of target spaces, in giving a complete description of the harmonic map in terms of algebraic geometry \cite{Hitchin:90, PS:89, Bobenko:91, McIntosh:95, McIntosh:96, FPPS:92, McIntosh:01, CW:08, Carberry:09}). This has been achieved through the construction of an algebraic curve  $X$, called the \emph{spectral curve, } together with a line bundle on $ X $ and some additional data, from  which it is possible to recover the harmonic map $ f $ up to gauge transformations.  The harmonic map gives a linear flow in a sub-torus of the Jacobian of $ X $ and through this viewpoint we obtain an explicit realisation of the harmonic map equations as an algebraically completely integrable Hamiltonian system.

The spectral curve $ X $ must satisfy crucial \emph{periodicity conditions} which ensure that the corresponding harmonic map is defined on a 2-torus, rather than merely on its universal cover. These periodicity conditions most naturally come in two layers. The first of these corresponds to the double-periodicity of a certain harmonic section of a principal $ G $-bundle over the complex numbers whilst the second is additionally required to recover an actual harmonic map. In Theorems ~\ref {theorem:periodicity} and \ref{theorem:singular} we give a number of different formulations of both layers of these periodicity conditions together with a simple proof of their equivalence, in the process explaining why the periodicity criteria given in \cite {Hitchin:90} and \cite {McIntosh:01} are in fact the same.  In particular this makes clear that periodicity depends only upon the spectral curve and its projection to the projective line, not on the remaining spectral data.

The periodicity conditions place transcendental restrictions on the algebraic description of a harmonic 2-torus provided by the spectral curve data.  Given a positive integer $ g $, it natural to ask whether there exist spectral curves of (arithmetic) genus $ g $ satisfying these periodicity conditions. This is geometrically significant because an immediate application of the spectral curve description is that it shows that a harmonic map of a 2-torus may be deformed through an $ n $-dimensional family, where $ n $  is the dimension of the sub-torus of the Jacobian of the spectral curve alluded to above and is determined by the spectral genus. Furthermore for several target spaces fundamental invariants of the harmonic maps such as energy and geometric complexity have been bounded below in terms of the spectral genus \cite {BLPP:07, Haskins:04}.
The existence of harmonic tori of arbitrary spectral genus $ g $\footnote {Excepting spectral genus $ g = 1 $ for constant mean curvature tori in $\R ^ 3 $ for which no solutions exist.} has been proven for a number of target spaces \cite {EKT:93, Jaggy:94, Carberry:04, CM:03, CS:12}, by demonstrating the existence of spectral curves of the appropriate genus satisfying periodicity conditions. It is a measure of how unwieldy these conditions are that in all except the last case this was shown only for spectral curves in some neighbourhood of a curve of geometric genus zero, in order to simplify the computations. (In\cite {CS:12} it was shown that spectral curves satisfying the periodicity conditions are in fact dense in the space of all spectral curves for the case of constant mean curvature tori in $ S ^ 3 $.) The periodicity conditions are by far the most difficult part of the spectral curve data to handle, and it is hoped that giving various forms of these conditions and elementary proofs of the relationships between them will render these  conditions more transparent.

More broadly, spectral curves are an important tool in integrable systems.  They provide an algebro-geometric description of the solution to a family of differential equations in Lax form $ dA_\lambda (t) = [A_\lambda (t), B_\lambda (t)]$, hence introducing a new set of techniques to the study of these differential equations 
 and providing an explicit realisation of such an equation as a completely integrable Hamiltonian system.   Moreover as Griffiths has shown \cite {Griffiths:85}, whenever a natural cohomological condition on the Lax pair is satisfied, the spectral curve approach provides an explicit linearisation of the Lax equation as the equation can be reformulated as a linear flow in the Jacobian of the spectral curve. Familiar integrable systems such as the Toda lattice, geodesics on an ellipsoid, 
the Euler equations for a free body moving around a fixed point and Nahm's equations are accounted for in this way  \cite {AM:80:1, AM:80:2, Hitchin:84}, and in each case the spectral curve is simply the characteristic polynomial  solution to the Lax pair. However the harmonic map story involves some additional subtleties. In particular, the harmonic map corresponds not to a single solution to a Lax pair equation as above but rather an entire algebra of them, prompting a more sophisticated spectral curve construction than simply taking the characteristic polynomial.
 We explain the traditional constructions which use either holonomy of a family of flat connections or  polynomial killing fields, which are the solutions to the Lax pair mentioned above.
We also  describe a more general approach, for which the existence of both the family a flat connections and the Lax pair is superfluous, and discuss the merits of each construction.

For ease of exposition for the equivalent characterisations of periodicity we restrict ourselves to the specific case of harmonic maps from a 2-torus to $ S ^ 3 $, allowing for a more explicit treatment. This was the case studied in detail by Hitchin in \cite {Hitchin:90}. The various manifestations of the periodicity conditions are tailored to this case but these are simply different ways of expressing an element of the \v{C}ech 
cohomology $ H ^ 1 (X,\mathcal O) $ for an algebraic curve $ X $, and it is elementary to give the appropriate modification for spectral curves for another class of harmonic maps.

In section ~\ref{section:content} we give basic background material and set up the standard integrable systems description of harmonic maps by a family of flat connections. Section ~\ref{section:spectral} contains an analysis of the periodicity conditions, relating their various forms. The final section describes an alternative approach to spectral curves of harmonic maps using polynomial Killing fields as well as the more general approach of the multiplier curve, for which no family of flat connections is needed.

It is my pleasure to express my gratitude to Hyam Rubinstein for his support and wisdom over many years and to the Hyamfest organisers for the opportunity to contribute to these proceedings in his honour.

\newcommand{\g}{\mathfrak{g}}
 
 \section {Harmonic maps of surfaces into Lie groups}\label{section:content}

Harmonic maps of surfaces have many applications in geometry. The most famous examples are minimal surfaces, a conformal immersion is harmonic precisely when its image is a minimal (immersed) surface. Physically, minimal surfaces are modelled by soap films or other thin membranes and locally provide the surface of least area with a given boundary. If instead we ask the soap film to enclose a volume, that is we consider soap bubbles, then mathematically we are considering constant mean curvature surfaces, with mean curvature $ H\neq 0 $. The mean curvature is proportional to the pressure difference between the inside and outside of the bubble. Surfaces of constant mean curvature are characterised by having a harmonic Gauss map and so are also a geometric manifestation of harmonic maps. Other geometrically interesting surfaces described by harmonic maps include Willmore surfaces, which are characterised by the harmonicity of their conformal Gauss map.

In this section we introduce the approach to harmonic maps of surfaces using integrable systems and spectral curves, and provide the relevant background information. 

\subsection {Introduction to harmonic maps}

Recall that a smooth function $ f: U\subset\R ^ m\rightarrow\R $ is {\em harmonic} if its Laplacian vanishes:
\[
\Delta f =\nabla\cdot \nabla f =\dfrac {\partial ^ 2F} {\partial (x^1) ^ 2} +\cdots +\dfrac {\partial ^ 2f} {\partial (x ^ m) ^ 2} = 0.
\]
We define the {\em energy} of $ f $ on a compact set  $ U $ to be
\[
E (f) =\frac 12\int _U\left|df\right| ^ 2\,d x.
\]
Then for any smooth $ u\colon U\to \mathbb{R} $ vanishing on the boundary of U,
\[
    \frac{d}{d\varepsilon}\Big|_{\varepsilon = 0} E(f+\varepsilon u) = \int_U \nabla f \cdot \nabla u \, d x = -\int_U u \Delta f\,d x
\]
where the last equality comes from applying the divergence theorem
to $ F = u\nabla f $. Harmonic functions are thus characterised by being critical for the energy functional.

Harmonic maps between pseudo-Riemannian manifolds are a generalisation of harmonic functions and as such satisfy a generalisation of the Laplace equation. Given a smooth map $ f: M\rightarrow N $ where $ (M, g) $ and $ (N, h) $ are pseudo-Riemannian manifolds, the  {\em second fundamental form} of $ f $ is $\nabla (df) $, 
where $\nabla $ is the connection on $\mathrm{Hom} (TM, TN) $ induced from the Levi-Civita connections on $ M $ and $ N $, namely
\[
(\nabla (df)) (X, Y) =\nabla ^ {TN}_{df (X)} (df (Y)) - df (\nabla ^ {TM}_X (Y)).
\]
The  {\em tension} of $ f $ is defined to be the trace of its second fundamental form with respect to $ g $, that is
\[
\tau (f) = g ^ {ij}\nabla (df) \left (\frac {\partial} {\partial x ^ i},\frac {\partial} {\partial x ^ j}\right)
\]
where
\[
g ^ {ij} = g (dx ^ i, dx ^ j) 
\]
and $ x ^ i $ is a local coordinate system for $ M $.
\begin {definition}
We say that a smooth map $ f: M\rightarrow N $ between pseudo-Riemannian manifolds is {\em harmonic} if the tension $\tau (f)=\tr_g (\nabla (df)) $ of $ f $ vanishes.\end {definition}
Writing $ x ^ i $ for local coordinates on $ M $ and $ y ^\alpha $ for local coordinates on $ N $,  the generalisation of Laplace's equation to Riemannian manifolds is 
\[
\tau (df) = g ^ {ij}\left (\frac {\partial ^ 2f ^\gamma} {\partial x ^ i\partial x ^ j} +\frac {\partial f ^ \beta} {\partial x ^ i}\frac{\partial f ^\alpha} {\partial x ^ j}\leftsuperscript {N} {\Gamma}^\gamma_{\alpha\beta} -\leftsuperscript {M} {\Gamma ^ k_{ij}}\frac {\partial f ^\gamma} {\partial x ^ k}\right)\frac {\partial} {\partial y ^\gamma}= 0.
\]
For $ f:\R ^ m\rightarrow\R $,
\begin {align*}
(\nabla (df)) (\dfrac {\partial} {\partial x ^ i}, \dfrac {\partial} {\partial x ^ j})
& = \nabla ^ {T\R}_{\frac {\partial y} {\partial x ^ i}\frac{\partial}{\partial y}}\left (\dfrac {\partial y} {\partial x ^ j}\dfrac {\partial} {\partial y}\right) -df \Bigl(\nabla ^ {T\R ^ m}_{\frac {\partial} {\partial x ^ i}} \Bigl(\dfrac {\partial} {\partial x ^ j}\Bigr)
& =\dfrac {\partial ^ 2y} {\partial x ^ i\partial x ^ j}\dfrac {\partial} {\partial y}
\end {align*}
so that we recover the standard Laplace equation.
%

Another fundamental example is obtained by setting the domain to $\R $
as $ f:\R\rightarrow N $ is harmonic precisely when
\[
(\nabla (df)) (\dfrac {\partial} {\partial t}, \dfrac {\partial} {\partial t}) =\nabla ^ {TN}_{df (\frac {\partial} {\partial t})} (df (\dfrac {\partial} {\partial t})) - df (\nabla ^ {T\R}_\frac {\partial} {\partial t} (\dfrac {\partial} {\partial t}))
\]
vanishes, that is when $ f $ is a geodesic.


As with harmonic functions, harmonic maps are critical for the energy functional, and a straightforward computation yields 
\begin {proposition}
A smooth map $ f: M\rightarrow N $ between Riemannian manifolds is harmonic if and only it is critical for the {\em energy functional}
\[
E (f) =\frac 12\int_{U}\left\|df\right\| ^ 2d \mathrm{vol}_M,
\]
in the sense that for any one-parameter variation $ f_t $ of $ f $,
\[
\left.\frac {d} {dt}\right|_{t = 0} E (f_t) = 0.
\]
\end {proposition}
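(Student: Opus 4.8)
The plan is to compute the first variation of $E$ directly, show that it equals the $L^2$-pairing of the variation field against $-\tau(f)$, and then invoke the fundamental lemma of the calculus of variations to obtain both implications at once. First I would fix a smooth one-parameter variation $f_t\colon M\to N$ with $f_0=f$, compactly supported (or take $M$ compact), and set $V=\left.\tfrac{\partial f_t}{\partial t}\right|_{t=0}$, a section of the pullback bundle $f^*TN$. Viewing $F(x,t)=f_t(x)$ as a map $M\times(-\varepsilon,\varepsilon)\to N$ and pulling back the Levi-Civita connection of $(N,h)$ to a connection $\tfrac{D}{\partial t}$ along $F$, I would differentiate under the integral sign; since the coefficients $g^{ij}$ are independent of $t$ and $\tfrac{D}{\partial t}$ is compatible with $h$, this yields
\[
\left.\frac{d}{dt}\right|_{t=0}E(f_t)=\int_M g^{ij}\left\langle \left.\frac{D}{\partial t}dF\Bigl(\frac{\partial}{\partial x^i}\Bigr)\right|_{t=0},\ df\Bigl(\frac{\partial}{\partial x^j}\Bigr)\right\rangle d\mathrm{vol}_M.
\]

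The crucial step, and the only place where the geometry of $N$ enters, is to commute the time and space derivatives. Because $\tfrac{D}{\partial t}$ is induced from the \emph{torsion-free} Levi-Civita connection on $N$, the second fundamental form $\nabla(dF)$ of the map $F$ on the product $M\times(-\varepsilon,\varepsilon)$ is symmetric, so $\tfrac{D}{\partial t}dF(\partial/\partial x^i)=\tfrac{D}{\partial x^i}dF(\partial/\partial t)$ (the coordinate fields $\partial/\partial t$ and $\partial/\partial x^i$ commuting kills the would-be torsion term); evaluated at $t=0$ the right-hand side is $\nabla_{\partial/\partial x^i}V$. This equality of mixed covariant derivatives, which genuinely fails without torsion-freeness, is the main obstacle. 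Substituting it gives
\[
\left.\frac{d}{dt}\right|_{t=0}E(f_t)=\int_M g^{ij}\left\langle \nabla_{\partial/\partial x^i}V,\ df\Bigl(\frac{\partial}{\partial x^j}\Bigr)\right\rangle d\mathrm{vol}_M.
\]

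Finally I would integrate by parts, exactly paralleling the scalar computation already carried out for $f\colon U\subset\R^m\to\R$ in the text. Using metric compatibility to write $g^{ij}\langle\nabla_{\partial/\partial x^i}V,df(\partial/\partial x^j)\rangle$ as the divergence of the vector field metrically dual to the one-form $\langle V,df(\cdot)\rangle$ minus a remainder, the divergence theorem annihilates the total-derivative term by the compact support of $V$, leaving
\[
\left.\frac{d}{dt}\right|_{t=0}E(f_t)=-\int_M\bigl\langle V,\ \tr_g\nabla(df)\bigr\rangle d\mathrm{vol}_M=-\int_M\langle V,\tau(f)\rangle\,d\mathrm{vol}_M.
\]
If $\tau(f)\equiv0$ this vanishes for every variation, so $f$ is critical; conversely, if $f$ is critical the integral vanishes for all compactly supported $V$, and since such $V$ range over all sections of $f^*TN$ the fundamental lemma of the calculus of variations forces $\tau(f)=0$ pointwise, i.e. $f$ is harmonic. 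The one piece of bookkeeping to watch is that the Christoffel terms concealed inside the divergence reassemble into the $-df(\nabla^{TM}_{\cdot}\,\cdot)$ part of $\nabla(df)$, so that the remainder is precisely the invariant tension $\tau(f)=\tr_g\nabla(df)$ and not some coordinate-dependent expression.
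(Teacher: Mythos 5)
Your proof is correct and is exactly the ``straightforward computation'' the paper alludes to but does not write out: the paper only proves the scalar case $f\colon U\subset\R^m\to\R$ via the divergence theorem, and your argument (metric compatibility, the symmetry $\tfrac{D}{\partial t}dF(\partial/\partial x^i)=\tfrac{D}{\partial x^i}dF(\partial/\partial t)$ from torsion-freeness, integration by parts, and the fundamental lemma) is the standard generalization of that same computation, with the key identity $\left.\tfrac{d}{dt}\right|_{t=0}E(f_t)=-\int_M\langle V,\tau(f)\rangle\,d\mathrm{vol}_M$ correctly identifying the tension $\tau(f)=\tr_g(\nabla(df))$ as the Euler--Lagrange operator.
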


The standard physical interpretation of this result is that one should think of the target manifold $ N $ as being made of marble and the domain manifold $ M $ as formed of rubber, the harmonic map stretches the domain into a shape which locally minimises energy.

We shall consider now harmonic maps of a Riemann surface into a Lie group with a bi-invariant pseudo-metric and  both assumptions simplify the harmonic map equation. The condition that the group possess a bi-invariant pseudo-metric is not onerous, holding true for any reductive Lie group, that is one whose Lie algebra can be written as a direct sum of a semisimple Lie algebra and an abelian one. For such groups we can combine the Cartan-Killing form on the semisimple Lie algebra with any form on the abelian part. 
We shall be interested in harmonic maps of surfaces into Lie groups and  symmetric spaces. The latter provide more geometric applications but as we shall see, using the totally geodesic Cartan embedding, one can study harmonic maps into symmetric spaces in terms of harmonic maps into Lie groups and vice versa. 

\subsection {Harmonic maps from a surface to a Lie group}

Any smooth map into a Lie group $ G $ is locally described by a zero curvature condition. To see this we introduce the Maurer-Cartan form on $ G $, namely the $\mathfrak{g}$-valued 1-form on $G$ that is
left-invariant and acts as the identity on $T_e G = \mathfrak{g} $. It is given explicitly by
\[
\omega(v) = (L_{g^{-1}})_* v,\quad v\in T_gG,
\] 
where $ L_g $ is left multiplication by $ g $.
For a linear Lie group, writing $ g: G\rightarrow M_{n\times n} (\R) $ then
\[
\omega_a = g (a)^ {- 1} dg_a
\]
which is often written as
\[
\omega = g^ {- 1} dg.
\]
It is customary to use this notation even when $ G $ is not linear.

A straightforward computation shows that 
for any Lie group $ G $, the Maurer-Cartan form $\omega$
satisfies the {\it Maurer-Cartan equation}
\[
d\omega + \frac {1}{2}[\omega \wedge\omega]=0.
\]
Here for $\mathfrak g $-valued 1-forms $\varphi $ and $\psi $ we define
\[
[\varphi\wedge\psi ] (X, Y) = [\varphi (X),\psi (Y)] - [\varphi (Y),\psi (X)].
\]
The Maurer-Cartan equation can be viewed as a zero-curvature condition, on a local coordinate neighbourhood we can define a connection by $ d +\omega $, and then the Maurer-Cartan equation precisely says that this connection is flat.
That such an equation locally describes smooth maps into $ G $ now follows from the following observation of Cartan.
\begin{lemma}[Cartan]
Given any smooth map $f$ from a  manifold $ M $ to
$G$, we may pull back the Maurer-Cartan form to obtain a
$\g$-valued 1-form $\phi = f ^*\omega $ on $ M $ satisfying
\[
d\phi+\frac 12 [\phi\wedge\phi]=0.
\]
Two maps define the same form on $M$ if and only if they differ by
left translation by a fixed element of $G$. 

Conversely, given a
$\g$-valued 1-form $\phi$ on a simply connected manifold $ M $ satisfying the Maurer-Cartan
equation, we may integrate it to obtain a smooth map $f: M\to G$ such
that $f^*(\omega)=\phi$, where $f$ is defined only up to left
translation.
 For $ G $ linear, $\phi = f^ {- 1} df $.
\end{lemma}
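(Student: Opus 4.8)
The plan is to handle the several assertions in turn, obtaining the first two from functoriality of pullback and the left-invariance of $\omega$, and reserving the real work for the converse integration statement, which I would establish by a Frobenius argument on the product $M\times G$.

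For the forward direction I would simply pull back the Maurer--Cartan equation. Since $d$ commutes with pullback and the bracket $[\cdot\wedge\cdot]$ acts pointwise on the $\g$-values, applying $f^*$ to $d\omega+\frac12[\omega\wedge\omega]=0$ gives $d\phi+\frac12[\phi\wedge\phi]=0$ immediately for $\phi=f^*\omega$. For the ``if'' half of the equivalence, if $\tilde f=L_a\circ f$ with $a\in G$ fixed, then $\tilde f^*\omega=f^*L_a^*\omega=f^*\omega$ because $\omega$ is left-invariant, so the two maps induce the same form.

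The ``only if'' half is the first point requiring a genuine computation, and here I assume $M$ connected. Granting $f^*\omega=\tilde f^*\omega$, I would show the pointwise product $h=\tilde f\,f\invers$ is constant. In the linear case this is transparent: from $\phi=f\invers df=\tilde f\invers d\tilde f$ one has $df=f\phi$ and $d\tilde f=\tilde f\phi$, whence $dh=\tilde f\phi f\invers-\tilde f f\invers(f\phi)f\invers=0$, so $h\equiv a$ and $\tilde f=L_a\circ f$. For general $G$ the same conclusion falls out once the converse is in hand: two maps inducing the same $\phi$ have graphs that are integral manifolds of a single distribution, so they lie in leaves interchanged by the element of $G$ matching them at one point.

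For the converse I would set up the standard integrability argument on $M\times G$. With projections $\pi_M,\pi_G$, define the $\g$-valued $1$-form $\Theta=\pi_G^*\omega-\pi_M^*\phi$ and let $\mathcal D=\ker\Theta$, a distribution of rank $\dim M$. Computing $d\Theta=-\frac12[\pi_G^*\omega\wedge\pi_G^*\omega]+\frac12[\pi_M^*\phi\wedge\pi_M^*\phi]$ and restricting to $\mathcal D$, where $\pi_G^*\omega=\pi_M^*\phi$, shows $d\Theta$ vanishes on $\mathcal D$, so $\mathcal D$ is integrable by the Frobenius theorem. Because $\omega$ is a pointwise linear isomorphism $T_gG\to\g$, a vector in $\mathcal D$ tangent to the $G$-fibre must vanish, so $\pi_M$ restricts to a local diffeomorphism on each leaf and each leaf is locally a graph $\{(x,f(x))\}$ with $f^*\omega=\phi$. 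The hard part, and the reason simple-connectivity enters, is globalising this local graph: I would continue it along paths in $M$ and invoke a monodromy argument to see that simple-connectivity forces the continuation to be single-valued, yielding a globally defined $f$. Finally $L_a^*\Theta=\Theta$ shows the $G$-action on the second factor permutes leaves, which reproduces the left-translation ambiguity and closes the equivalence, while $\phi=f\invers df$ in the linear case is read off directly from $\omega=g\invers dg$.
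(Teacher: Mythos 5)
The paper does not prove this lemma at all: it is stated as a classical observation of Cartan and used as background, so there is no in-paper argument to compare against. Your proposal is the standard textbook proof, and most of it is correct as written: the forward direction by naturality of pullback and left-invariance of $\omega$, the linear-case uniqueness by differentiating $h=\tilde f f^{-1}$, and the converse by applying Frobenius to the distribution $\mathcal D=\ker\bigl(\pi_G^*\omega-\pi_M^*\phi\bigr)$ on $M\times G$, whose leaves are locally graphs because $\omega_g\colon T_gG\to\g$ is an isomorphism (so $\mathcal D$ meets the fibre directions trivially).

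The one genuine soft spot is the globalisation step, which you yourself flag as ``the hard part'' and then only gesture at. Simple connectivity together with the fact that $\pi_M$ restricts to a local diffeomorphism on each leaf is not by itself sufficient: a local diffeomorphism into a simply connected manifold need not be surjective or proper (the inclusion of a proper open disc into $\R^2$ is a local diffeomorphism), so ``continue along paths and invoke monodromy'' presupposes exactly what must be proved, namely that continuation along \emph{every} path is possible, i.e.\ that $\pi_M$ restricted to a leaf is a covering map of $M$. This is where the group structure must be used a second time: lifting a path $\gamma$ in $M$ to a leaf amounts to solving the ODE $\dot g(t)=(L_{g(t)})_*\bigl(\phi(\dot\gamma(t))\bigr)$ in $G$, and since left translation by any fixed element carries solutions to solutions, the maximal existence interval of the solution is independent of its initial value; a patching argument over $[0,1]$ then gives a lift of the whole path. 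With path lifting in hand, the monodromy theorem and $\pi_1(M)=1$ yield the single-valued $f$, and the rest of your argument --- the $G$-action $(x,g)\mapsto(x,ag)$ permuting leaves, and the general ``only if'' statement obtained by comparing the graphs of $L_a\circ f$ and $\tilde f$ inside a common leaf (via an open-closed argument on the set where they agree, using connectedness of $M$) --- goes through as you describe.
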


 We can express the condition for $ f: M\rightarrow G $ to be harmonic in terms of $\phi = f ^*\omega $. 
Trivialising $ TG $ by left-translation, we shall write
\begin {itemize}
\item $\nabla ^ L $ for the left connection (in which the left-invariant vector fields are parallel).
\item $\nabla ^ R $ for the right connection
\item $\nabla $ for the Levi-Civita connection with respect to the Killing form.
\end {itemize}
These connections are related by
\[
\nabla ^ R =\nabla^ L +\omega\text { and }\nabla =\frac 12 (\nabla ^ L +\nabla ^ R) =\nabla ^ L +\frac 12\omega.
\]
Using this, a computation shows that the tension of a map $ f: M\rightarrow G $ is given by
\[
\tau (f) = d ^*\phi
\]
where $ d ^*$ is the adjoint of $ d $ and as above, $\phi = f ^*\omega $.
Hence for $ M $ a Riemann surface and $ G $ a Lie group with bi-invariant pseudo-metric, $ f: M\rightarrow G $ is harmonic if and only if
\[
d*\phi=0,
\]
where $*$ is the Hodge star operator
\[
*dx = dy,\quad*dz = - dx.
\]
Thus a harmonic $ f:U\subset M\rightarrow G $ on a simply connected subset $ U $ of a Riemann surface $ M $ is equivalent to a $\g $-valued 1-form $\phi $ ($\phi = f^ *\omega $) satisfying
\[
d\phi+\frac 12 [\phi\wedge\phi]=0,\quad d*\phi=0.
\]
That is, writing $\phi =\phi_zdz +\phi_{\bar z} d\bar z $ we have
\[
\dfrac {\partial\phi_z} {\partial\bar z} -\dfrac {\partial\phi_{\bar z}} {\partial z} + [\phi_{\bar z},\phi_z] = 0
\] and
\[
\dfrac {\partial\phi_z} {\partial\bar z} +\dfrac {\phi_{\bar z}} {\partial z} = 0
\]
or more symmetrically,
\begin {equation}\dfrac {\partial\phi_z} {\partial\bar z} +\frac 12 [\phi_{\bar z},\phi_z]  = 0\label {eq:harmonic1}\end {equation}
and
\begin {equation}\dfrac {\partial\phi_{\bar z}} {\partial z} +\frac 12 [\phi_z,\phi_{\bar z}]  = 0.\label {eq:harmonic2}\end {equation}
The key observation which underlies the integrable systems approach to such harmonic maps is the fact that these equations can be expressed as the requirement that a certain family of connections has zero curvature \cite {Pohlmeyer:76, Uhlenbeck:89}. 
Namely, for $\lambda\in\C ^\times $, consider the connections 
\begin {equation}
\nabla_\lambda =
 \nabla ^ L +\phi_\lambda \label{eq:connections}
\end {equation}
 where
\[
\phi_\lambda = \frac 12 (1+ \lambda ^{-1})\phi_z dz +\frac 12 (1 - \lambda)\phi_{\bar z}d\bar z.
\]
Then it is easy to check that \eqref {eq:harmonic1} and \eqref {eq:harmonic2} are equivalent to the each of the connections $ \nabla_\lambda $, $\lambda\in\C ^\times $ being flat, that is satisfying
\begin{equation}\label {eq:family}
d\phi_\lambda +\frac 12 [\phi_\lambda\wedge\phi_\lambda ]= 0.
\end {equation}
The zero-curvature formulation enables us to give some  solutions to the harmonic map equations simply by integrating a pair of commuting vector fields on a finite dimensional space, that is by solving a pair of ordinary differential equations. This is far simpler than dealing with the harmonic map equations directly and maps obtained in this way are said to be of finite-type.
This formulation is also the basis for describing harmonic maps of finite type in terms of spectral curves. The zero-curvature equations become a linear flow in the Jacobian of the spectral curve. This has led to much interesting moduli-space information as well as descriptions of important invariants such as energy. 
%

\newcommand{\Ad}{\mathrm{Ad}}

\section {Harmonic tori in $ S ^ 3 $: spectral curve data and periodicity}
\label {section:spectral}

We begin by describing Hitchin's holonomy construction of a spectral curve for harmonic or constant mean curvature immersions from a 2-torus to $ S ^ 3 $ \cite {Hitchin:90}. In this context we then provide a number of different formulations of both layers of the periodicity conditions and an explicit proof of their equivalence. Although for concreteness we have focused on harmonic maps to a particular target, as the reader will see it is straightforward to extend these interpretations to a general algebraic curve. 

We give, in Theorem~\ref {theorem:periodicity}, four different ways of describing an element of $ H ^ 1 (X,\mathcal O) $, for $ X $ a spectral curve. Namely, we describe this vector using the periods of a differential $\Theta $ of the second kind, the principal parts of this differential, derivatives of Abel-Jacobi maps with base points the singularities of $ \Theta $, and the derivative of a family of line bundles defined using its principal parts.

Constant mean curvature surfaces in $\R ^ 3 $ are characterised by having a harmonic Gauss map, which of course has target $ S ^ 2\subset S ^ 3 $. These are thus included in description below, once we take account of an additional condition needed to ensure that the constant mean curvature surface is doubly periodic, not merely its Gauss map. Analogously spectral data for constant mean curvature tori in the 3-sphere or in hyperbolic 3-space are quite similar to that described below \cite {Bobenko:91}.

Hitchin \cite{Hitchin:90}  studied harmonic maps $f: T ^ 2\rightarrow SU (2)$ by considering the holonomy  of a family of flat connections $\nabla_\lambda$. Writing $H ^\gamma_\lambda (z)$ for the holonomy of $\nabla_\lambda$ around $\gamma\in\Pi_1 (T ^ 2, Z)$, we may define an algebraic curve $X$, the \emph{spectral curve}, by taking the eigenline curve of $H ^\gamma_\lambda (z)$ as described below. 
Since the fundamental group of $T ^ 2$ is abelian this definition is independent of the choice of $\gamma$, and as choosing a different $z\in T ^ 2$ changes the holonomy only by conjugation, it is independent of the choice of $z$. However the eigenline themselves do depend on $z$, and we obtain in this way an eigenline bundle $\mathcal E_ z$ for each $ z\in T ^ 2 $, and a linear map $T ^ 2\rightarrow\mbox {Jac} (X)$.

As explained above, given a harmonic map $ f: T ^ 2\rightarrow S ^ 3\cong SU (2) $ and writing $\phi = f^ {- 1} df $, the $ S ^ 1 $-family of one-forms 
\[
\varphi _\lambda = \phi_\lambda = \frac 12 (1+ \lambda ^{-1})\phi' +\frac 12 (1 - \lambda)\phi''
\]
each satisfies the Maurer-Cartan equation. Allowing $\lambda $ to be any non-zero complex number, we obtain a $\C ^\times $-family of flat connections 
\[
\nabla_\lambda =\nabla ^ L +\phi_\lambda 
\]
 in the bundle 
 $ V = f ^*(T SU (2)\otimes \C) $, where $\nabla ^ L $ is the connection induced by trivialising the tangent bundle by left translation. For $\lambda\in S ^ 1 $, the holonomy of these connections is valued in $ SU (2) $ and for $\lambda\in\C ^\times $ it is valued in the corresponding complex group $ SL (2,\C) $. Since $ SU (2) $ are the unit quaternions, the $ SU (2) $-structure in the bundle $ V $ may be exhibited by a quaternionic structure $ j $ (that is an anti-linear involution whose square is $ -1 $) together with a symplectic form $\omega $.
 Note that we could equivalently consider these connections in the $ SU (2) $-principal bundle $ P $ given by the pullback under $ f $ of the trivial bundle $ SU (2)\times SU (2)\rightarrow SU (2) $ with projection $ ( g_1, g_2)\mapsto g_1 $ and action $ ( g_1, g_2)\cdot h = (g_1, g_2h) $, and then $ f ^*(TSU (2)\otimes\C) = \Ad P $. 
When the connections $\nabla_\lambda $ arise from a harmonic map $ f: T ^ 2\rightarrow S ^ 3 $, both $\nabla_1 $ and $\nabla_{-1} $ are not merely flat but trivial, since they represent the left and right connections in the pullback of the tangent bundle. The harmonic map $ f $ is precisely the gauge transformation  between these two trivialisations. 

If we simply have a family of flat connections $\nabla_\lambda $ of the form given above in a rank two complex vector bundle $ V $ with $ SU (2) $ structure, where $\nabla ^ L $ is a given connection in this bundle, then this data does not quite correspond to a harmonic map to $ SU (2) $, instead the geometric interpretation of the corresponding gauge transformation is that it yields a harmonic section $ f $  of  the  bundle $ P $. 

Let $ \pi_1 (T ^ 2, z) $ denote the fundamental group of the torus $ T ^ 2 $ with base point $ z $, and then for each $\lambda\in\C ^\times $ the holonomy $ H_\lambda (z):\pi_1 (T ^ 2, z)\rightarrow SL (2,\C) $ of the connection $\nabla_\lambda $ gives a family of commuting matrices, which hence have common eigenspaces. The fact that this family is abelian is crucial, and from this point of view is the reason why attention is restricted to harmonic maps of genus one domains as this is the only case in which we have a nontrivial yet abelian fundamental group. Unless the holonomy is trivial, then away from isolated $\lambda\in\C ^\times $ it will have a pair of distinct eigenlines. Trivial holonomy corresponds to conformal harmonic map into a totally geodesic $ S ^ 2\subset S ^ 3 $ and is excluded from our considerations. A key result is that there are only finitely many values $\lambda\in\C ^ \times $ at which the holonomy does not have two distinct eigenlines. The proof given by Hitchin \cite [Proposition 2.3] {Hitchin:90}  uses both the fact that an elliptic operator on the compact surface $ T ^ 2 $ can have but a finite-dimensional kernel and that due to the simple structure of the group $ SU (2) $, if $ H_\lambda $ for $\lambda\in S ^ 1 $ leaves fixed a single vector it is necessarily trivial.

The spectral curve of a harmonic map $ f: T ^ 2\rightarrow S ^ 3 $ is the eigenline curve of the holonomy $ H_\lambda $. For $\lambda\in\C ^\times $ such that $ H_\lambda (z) $ has distinct eigenlines, let us denote these by $ E ^ 1_\lambda (z) $ and $E ^ 2_\lambda (z) $. At the isolated points for which these eigenlines are not distinct, we may for each $ z\in T ^ 2 $ define the one-dimensional subspaces $ E ^ 1_\lambda (z), E ^ 2_\lambda (z)\subset V_z $ by analytic continuation. Changing the choice of $ z\in T ^ 2 $ changes the eigenlines by conjugation by a $ SL (2,\C) $ matrix and hence does not affect this branching behaviour. The spectral curve is given by an equation of the form $ y ^ 2 =\tilde a (\lambda) $, for $\lambda \in \C ^\times $ the polynomial $\tilde a $ has a zero of order $ n $ at $\lambda $ precisely when  the eigenlines $ E ^ 1_\lambda (z) $ and $E ^ 2_\lambda (z) $
agree to order $ n $ as measured by the symplectic form $\omega $, that is when $\omega ( E ^ 1_\lambda (z),E ^ 2_\lambda (z)) $ vanishes to order $ n $. For details, see \cite {Hitchin:90}.

The holonomy satisfies $\overline { H}_{\bar\lambda^ {- 1}} ^ t = H_\lambda ^ {- 1} $ so the branching behaviour at $\bar\lambda^ {- 1} $ is identical to that at $\lambda $. To determine the appropriate completion of the open curve one must compute the
 limiting behaviour of the  eigenlines as $\lambda $ approaches $ 0 $. When $ f $ is non-conformal, the eigenlines $ E ^ j_\lambda (z) $ have distinct limits as $\lambda\rightarrow 0 $ and when $ f $ is conformal, they agree in the limit to first order \cite [Propositions 3.5, 3.9, 3.10] {Hitchin:90}. Then letting $ a (\lambda) $ be a polynomial with zeros in $\C ^\times $ described above, the spectral curve $ X $ of a non-conformal harmonic $ f: T ^ 2\rightarrow S ^ 3 $ is the hyperelliptic curve
$ y  ^ 2 = a (\lambda)$ whereas when $ f $ is conformal, it is the curve $ y ^ 2 =\lambda a (\lambda) $.


\newcommand{\Jac}{\mathrm{Jac}}
For each $ z\in T ^ 2 $ the eigenlines of the holonomy $ H_\lambda (z) $ with base point $ z $ then define a holomorphic line bundle $ \mathcal E_z $ on the spectral curve $ X $. These line bundles each have degree $ g + 1 $, where $ g $ denotes the arithmetic genus of the spectral curve $ X $. Fixing a point $ 0\in T ^ 2 $, the resulting map
\begin {align*}
T ^ 2 &\rightarrow\Jac (X)\\
z &\mapsto\mathcal  E _z \otimes \mathcal E _0 ^*
\end {align*}
is linear.

Choosing generators $[ 0, 1 ]$, $[ 0,\tau ]$ for the fundamental group and writing $\mu $, $\nu$ for the eigenvalue functions of the holonomy with respect to thsee generators, define two differentials of the second kind by $\Theta = d\log \mu$, $\Psi = d\log\nu$. The construction can be reversed to yield a harmonic map from spectral data; the following statement is taken from theorem 8.1 and 8.20 of \cite{Hitchin:90}.

\begin{theorem} [\cite {Hitchin:90}]
\label{theorem:Hitchin}
A harmonic map $f: T ^ 2\rightarrow S ^ 3$ uniquely determines
a quadruple $(X,\lambda,\mathcal E_0,\Theta,\Psi)$, satisfying the following:
\begin{enumerate}
\item\label{item:first}$X$ is a hyperelliptic curve $y ^ 2 = a (\lambda)$, with a fixed point free real structure $\rho$ covering involution in the unit circle $\lambda\mapsto\bar\lambda^ {- 1}$ and such that $X$ is smooth at $\lambda^{-1}\{0,\infty\}$;
\item The differentials satisfy $\sigma ^*\Theta = -\Theta $, $\sigma ^*\Psi= -\Psi$ and $\overline {\rho ^*\Theta} = -\Theta $, $\overline {\rho ^*\Psi} =\Psi $ where $\sigma$ is the hyperelliptic involution;
\item 
$\Theta $, $\Psi $ have double poles at  $\lambda^ {- 1}\{0,\infty\}$ and no residues, and are otherwise holomorphic;
\item If the domain of the harmonic map $ f $ is $ T ^ 2 =\R ^ 2/\Lambda $ with $\Lambda = 1\cdot\Z +\tau\cdot\Z $ then $\tau =\frac {\text {p.p.}_{p_0}\Psi} {\text {p.p.}_{p_0}\Theta} $ where p.p. denotes principal part;
\item\label{item:last}$\mathcal E_0\in\mbox {Pic}_{g +1} X$, where $g$ is the arithmetic genus of $X$, is quaternionic with respect to $\rho\sigma$;
\end{enumerate}
together with the {\it periodicity conditions}:
\begin{enumerate}
\item[(P1)] The periods of $\Theta $, $\Psi $ lie in $2\pi\sqrt {-1}\Z$;
\item[(P2)] 
If $\gamma_1 $ is a curve in $ X $ with endpoints the two points in $\lambda^ {- 1} (1) $ and $\gamma_{-1} $ a curve with endpoints the two points in $\lambda ^{-1} (-1) $ then
the integrals of $\Theta $, $\Psi $ over $\gamma_1 $ and $\gamma_{-1} $ are valued in $ 2\pi\sqrt {-1}\Z $.
\end{enumerate}

These periodicity conditions may equivalently be expressed as
\begin {enumerate}
\item [(P1)$'$] There exist meromorphic functions $\mu $ and $\nu $ on $ X\setminus\lambda ^{-1}\{0,\infty \} $ such that $\log\mu $ and $\log\nu $ extend to meromorphic functions on $ X $ satisfying $\Theta = d\log\mu$ and $\Psi= d\log\nu$. These functions have symmetries  $\mu\sigma ^*\mu = \nu\sigma ^*\nu = 1$;
\item [(P2)$'$] We may choose $\mu $, $\nu $ so that $\mu (p) =\nu (p) = 1$ for all $p \in\lambda^ {- 1} \{\pm 1\} $.
\end {enumerate}

Conversely,  $ (X,\mathcal E_0,\mu,\nu) $ as above  determines a harmonic map $ f: T ^ 2\rightarrow S ^ 3 $  (uniquely determined up to the action of $SO (4)$ on $S ^ 3$). 

The harmonic map $f$ is conformal if and only if $0$ and $\infty$ are branch points of the map $\lambda: X\rightarrow\P ^ 1$.
\end{theorem}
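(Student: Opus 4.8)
The plan is to prove the two directions of the correspondence separately and then treat the conformal case geometrically, working throughout with $S^3\cong SU(2)$ and the $\C^\times$-family of flat connections $\nabla_\lambda=\nabla^L+\phi_\lambda$ from \eqref{eq:connections}, together with the reality $\overline{H}_{\bar\lambda^{-1}}^t=H_\lambda^{-1}$. \textbf{Forward direction.} Given harmonic $f$, I would form the holonomy $H_\lambda(z)$ of $\nabla_\lambda$ around a fixed generator of the abelian group $\pi_1(T^2)$ and define $X$ as its eigenline curve. The hyperelliptic structure $y^2=a(\lambda)$ follows from the order of vanishing of $\omega(E^1_\lambda,E^2_\lambda)$, with finiteness of the branch points coming from Hitchin's elliptic-operator argument (finite-dimensional kernel), and the correct completion and smoothness at $\lambda^{-1}\{0,\infty\}$ from the asymptotic analysis of the eigenlines as $\lambda\to 0$. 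The eigenline-swap involution $\sigma$ and the reality involution $\rho\colon\lambda\mapsto\bar\lambda^{-1}$ yield items (1), (2) and, via the quaternionic structure for $\rho\sigma$, item (5); taking the eigenvalue functions $\mu,\nu$ of the two generators $[0,1]$, $[0,\tau]$ and setting $\Theta=d\log\mu$, $\Psi=d\log\nu$ gives differentials of the second kind with double poles at $\lambda^{-1}\{0,\infty\}$ and no residues, establishing (2) and (3), while matching principal parts to the lattice $\Lambda=\Z+\tau\Z$ gives (4).

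\textbf{Periodicity and its equivalent form.} Here I would argue that (P1) is exactly the single-valuedness of $\mu=\exp\int\Theta$ on the closed curve $X$: periods in $2\pi\sqrt{-1}\,\Z$ are necessary and sufficient for $\Theta=d\log\mu$ to integrate to a genuine meromorphic function, which is (P1)$'$. Likewise (P2), the integrality of the path integrals over $\gamma_{\pm1}$, is equivalent to being able to normalise $\mu,\nu$ to equal $1$ on $\lambda^{-1}\{\pm1\}$, namely (P2)$'$; geometrically this encodes that $\nabla_{\pm1}$ are trivial rather than merely flat, which is precisely what distinguishes a map into $SU(2)$ from a harmonic section of the principal bundle $P$.

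\textbf{Converse and conformality.} Given $(X,\mathcal E_0,\mu,\nu)$, I would reconstruct the flow $z\mapsto\mathcal E_z$ in $\mathrm{Jac}(X)$ and build Baker--Akhiezer sections to recover the connections $\nabla_\lambda$, so that (P1)/(P1)$'$ makes the construction well defined on $T^2$ rather than on $\R^2$, and (P2)/(P2)$'$ forces $\nabla_{\pm1}$ to be trivial; the gauge transformation between the two resulting trivialisations is then a genuine harmonic $f\colon T^2\to SU(2)$, determined up to the $SO(4)=(SU(2)\times SU(2))/\{\pm1\}$ of left/right translations and trivialisation choices. For the last statement, $f$ is conformal iff $\phi_z$ is nilpotent, which by the limiting analysis is exactly the first-order agreement of the eigenlines as $\lambda\to0$, i.e.\ the passage from $y^2=a(\lambda)$ to $y^2=\lambda a(\lambda)$, making $0$ and (by the reality $\rho$) $\infty$ branch points of $\lambda\colon X\to\P^1$.

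I expect the converse reconstruction to be the main obstacle: showing that the purely algebro-geometric data integrates to an actual harmonic map on the compact torus, not merely on its universal cover. This is precisely where both layers of periodicity are indispensable, and it is the reason the paper concentrates on clarifying and interrelating those conditions.
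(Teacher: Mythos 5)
Your proposal is correct in outline and takes essentially the same route as the paper, which in fact states this theorem without proof (it is quoted from Hitchin) and whose surrounding exposition --- the holonomy eigenline curve, the elliptic-operator finiteness argument, the definitions $\Theta = d\log\mu$, $\Psi = d\log\nu$, the reading of (P1) as yielding a doubly periodic harmonic section versus (P2) as trivialising $\nabla_{\pm 1}$, the linear flow $z\mapsto\mathcal E_z$ in $\mathrm{Jac}(X)$ for the converse, and branching of $\lambda$ over $0,\infty$ for conformality --- matches your sketch point for point. The one subtlety you gloss over is the sign ambiguity of $\mu$, $\nu$ in passing between (P2) and (P2)$'$, which the paper flags as corresponding to a harmonic map on $\R^2/(2\Lambda)$ rather than on $\R^2/\Lambda$.
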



%
The first periodicity condition is sufficient to yield a family of flat connections of the form  \eqref {eq:connections} or equivalently a harmonic section of an $SU (2)$ principal bundle over $T ^ 2$. To trivialise this bundle and hence obtain a harmonic map, the second condition is required. If we have $\Theta $, $\Psi $ satisfying both periodicity conditions, the meromorphic functions $\mu $, $\nu $ are each determined only up to sign, which corresponds to having a harmonic map on the torus $\R ^ 2/(2\Lambda) $ rather than on $\R ^ 2/\Lambda $. The next theorem will enable us to variously phrase periodicity condition (P1) in terms of Abel-Jacobi maps on the spectral curve $ X $, principal parts of the differentials $\Theta $, $\Psi $ or in terms of derivatives of the eigenline bundles. 

We first describe how  the above spectral data naturally gives rise to a linear flow in the Jacobian of the spectral curve.
Locally the differential forms $\Theta $ and $\Psi $ may be expressed as the differentials of functions $\int\Theta $ and $\int\Psi $, although these functions are only defined locally their principal parts give well-defined global sections of the 
 sheaf $\mathcal P$ of principal parts. This sheaf appears naturally in the sequence
\begin {equation}\label {eq:sequence}
0\rightarrow\mathcal O\rightarrow \mathcal M\stackrel P\rightarrow \mathcal P \rightarrow 0,
\end {equation}
where $\mathcal O$ and $\mathcal M$ are the sheaves of holomorphic and meromorphic functions respectively, and $P$ assigns to a meromorphic function its principal parts. We write
\begin {equation}\label {eq:principal}
P_\Theta = P \left(\int \Theta\right),\quad P_\Psi= P\left(\int\Psi\right).
\end {equation}
Taking the \v{C}ech cohomology of  \eqref {eq:sequence} on the spectral curve $X$, since $\mathcal M$ is a fine sheaf, $H ^ 1 (X,\mathcal M) = 0$ and so
\begin{equation}
\label{equation:principal}
H ^ 1 (X,\mathcal O) \cong \frac {H ^ 0 (X,\mathcal P)} {P (H ^ 0 (X,\mathcal M))}.
\end{equation}
Writing  $D =\lambda^{-1} (0+\infty )$ and denoting by $\mathcal O (D) $ the sheaf of meromorphic functions with poles at most on $ D $, we have an exact sequence
\[
0\rightarrow\mathcal O\rightarrow\mathcal O (D)\rightarrow \left. \mathcal O (D)\right|_D\rightarrow 0,
\]
and we can identify $\mathcal O (D) $ and $\left. \mathcal O(D)\right |_D $ as sub-sheaves of $\mathcal M $ and $\mathcal P $ respectively, with this identification the co-boundary map $\delta $ for this sequence is the same as that for  \eqref {eq:sequence}. 
The vectors $ P_\Theta $ and $P_\Psi $ lie in the 2-dimensional real subspace 
\[
 W =\{P\in H ^ 0 (D,\mathcal O (D))\mid\sigma ^*P = - P,\,\overline {\rho ^*P} = - P\} 
\]
 of the four-dimensional complex vector space $ H ^ 0 (D,\mathcal O (D)) $. Let $\Lambda  $ be the lattice in $ W $ defined by $ P_\Theta $ and $ P_\Psi $ and then we have a natural realisation $ W/\Lambda $ of the domain 2-torus of the harmonic map $ f $. Furthermore, the coboundary map $\delta:  W/\Lambda\rightarrow H ^ 1 (X,\mathcal O) /H ^ 1 (X,\Z) =\mathrm{Jac} (X) $ is linear, as we easily see by giving the explicit realisation of this map which we proceed now to do. We shall give details for the case when $ X $ is un-branched over $\lambda = 0,\infty $ or equivalently when the corresponding map $ f $ is non-conformal. The modifications for the conformal case are clear.

Assume then that $ D $ consists of four distinct points, and fix $ p_0\in\lambda^ {- 1} (0) $. Set 
\[
 q_0 =\sigma (p_0),\,p_\infty =\rho (p_0),\, q_\infty =\sigma (p_\infty )\text { and let }
 c = P_\Theta (p_0).
\]
We have an isomorphism 
\begin {align*}
\R ^ 2 &\cong W \\
 z &\mapsto P ^ {c z}
\end {align*}
where 
\[
P ^ {c z} (p_0) = {cz}\lambda^ {- 1},\, P ^ {cz} (q_0) = - {cz}\lambda^ {- 1},\\ P ^ {cz} (p_\infty) = -\bar {cz}\lambda,\, P ^ {cz} (q_\infty) =\bar {cz}\lambda. 
\]
Note that  (2) and  (4) 
 of Theorem~\ref {theorem:Hitchin} give that
\[
P_\Theta = P ^ c,\quad P_\Psi= P ^ {c\tau}.
\]
The scale factor of $ c $ is included in the isomorphism because the lattice $\Lambda $ has basis vectors $ c $ and $ c\tau $, whereas for the original domain torus we normalised these to $ 1 $ and $\tau $.

Let $U_0$ be an open neighbourhood of $p_0\in\lambda^{-1} (0)$ on which $\lambda$ is a local coordinate, and define $V_0 =\sigma (U_0), U_\infty =\rho(U_0), V_\infty =\sigma\rho (U_0)$. These four sets together with $A = X -\{p_0, q_0, p_\infty, q_\infty\}$ form a Leray cover of $X$. Then recalling $\delta: H ^ 0 (D,\mathcal O (D))\rightarrow H ^ 1 (X,\mathcal O)$ denotes the coboundary of the above sequence, $l_{P ^ {cz}} =\delta (P ^ {cz})$ is defined by the cocycles
\[
\begin{aligned}
\left (l_{P ^ {cz}}\right)_{AU_0} &= {cz}\lambda^ {- 1}, &
\left (l_{P ^ {cz}}\right)_{AV_0} &= - {cz}\lambda^ {- 1},\\
\left (l_{P ^ {cz}}\right)_{AU_\infty} &= -\bar {cz}\lambda &
\text{and }
\left (l_{P ^ {cz}}\right)_{AV_\infty} &= \bar {cz}\lambda.
\end{aligned}
\]
Clearly then $\delta $ is linear.
For each $ z\in\C $ then we define a line bundle $\mathcal E_z $ on $ X $ of degree $ g +1 $ by
\[
\mathcal E_z =\exp (l_{P ^ {c z}}) \otimes \mathcal E_0.
\]
(This is consistent with the construction of spectral data from a harmonic map, where $\mathcal E_z $ is defined to be the eigenline bundle with base point $z $.)

Now using the sequence
\[
0\rightarrow\C\rightarrow\mathcal  O (D)\rightarrow d\mathcal O (D)\rightarrow 0,
\]
the periods of $\Theta $ and $\Psi $ are obtained as the images of these differentials under the co-boundary map, identifying these periods as elements of $ H ^ 1 (X,\C) $.
In \cite [pp 664--5] {Hitchin:90}, Hitchin explains how one can use the \v{C}ech cohomology of a commuting diagram of short exact sequences of sheaves to prove that under the natural injection $ H ^ 1 (X,\C)\rightarrow H ^ 1 (X,\mathcal O) $ the periods of $\Theta $ and $\Psi$ correspond to $ -\delta (P_\Theta) $ and $ -\delta (P_\Psi) $ respectively. An explicit argument for this,  which is also more in keeping with the expository flavour of this article, is included in Theorem~\ref {theorem:periodicity}. For simplicity we restrict ourselves to the generic case where $ X $ is smooth.
 In Theorem ~\ref {theorem:singular} we explain how to extend our arguments to the case of a curve with ordinary double points, which is needed in order to extend the various characterisations to include periodicity condition (P2). 

Assume  then that  $ X $ is smooth, so we may choose a standard basis $A_1,\ldots , A_g, B_1,\ldots , B_g$ for the homology of $X$ such that $\rho _*A_i = - A_i$ and $\rho_*(B_i) \equiv B_i\;\mathrm{mod}\; \langle A_1, \ldots , A_g \rangle $. The class $A_i\in H_1 (X,\Z) $ may be represented by a lift of a curve in $\P ^ 1$ with winding number one  about pairs of branch points $\lambda_i$ and $\rho (\lambda_i)$ of $\lambda: X\rightarrow\P ^ 1$ and zero about the other branch points.  Together with the reality conditions on $\Theta,\Psi$, this gives that 
\[
s_i: =\int_{A_i}\Theta,\, t_i: =\int_{A_i}\Psi\text { are real. }
\]
Let $\omega ^ 1,\ldots,\omega ^ g$ be the basis of the holomorphic differentials on $X$ determined by
\[
\int_{A_i}\omega ^ j =\delta_{ij},
\]
and note that since $\int_{A_i}\overline {\rho ^*\omega ^ j} =\overline {\int_{\rho_*A_i}\omega ^ j } = -\overline {\int_{A_i}\omega ^ j} = - \int_{A_i}\omega ^ j $, these differentials satisfy $\overline {\rho ^*\omega ^ j} = -\omega ^ j$.   Hence
\[
\Theta_0: =\Theta -\sum_{j = 1} ^ g s_j\omega ^ j,\,\Psi_0: =\Psi-\sum_{j = 1} ^ gt_j\omega ^ j
\]
are differentials satisfying $\rho^*\Theta_0 = -\bar\Theta_0,\,\rho ^*\Psi_0 = -\bar\Psi_0$ and the criteria of Theorem~\ref{theorem:Hitchin} and enjoying the additional property that their $A$--periods are zero, or equivalently that all their periods are purely imaginary. Define then $\Pi_\Theta,\Pi_\Psi\in H ^ 0 (X,\mathcal K) ^\vee $ by, for $ \omega =\sum_{j = 1} ^ ga_j\omega ^ j $
\begin {equation}\label {eq:pi}
\Pi_\Theta (\omega) =\sum_{j = 1} ^ g a_j\int_{B_j}\Theta_0,\quad\Pi_\Psi (\omega) =\sum_{j = 1} ^ g a_j\int_{B_j}\Psi_0.
\end {equation}



The following theorem allows us to variously express periodicity condition (P1) in terms of derivatives of the linear family $ l_{P ^ {cz}} $, the principal parts $ P_\Theta $ and $ P_\Psi $ or  Abel-Jacobi maps on $ X $. As we show in Theorem~\ref {theorem:singular} it also enables us to express both periodicity conditions (P1) and (P2) together in these various forms. 
\begin {theorem}
\label{theorem:periodicity} Take smooth spectral data $ (X,\lambda,\Theta,\Psi) $ satisfying (\ref{item:first})--(\ref{item:last}) of Theorem ~\ref {theorem:Hitchin} and such that $\Theta $ and $\Psi $ have purely imaginary periods.
 Write $ z = u +\tau v $ for $ u,v\in\R $.
Then we have the following equalities.
\begin {alignat}{3} 
 -\dfrac {1} {2\pi\sqrt {-1}}\Pi_\Theta & = 
\dfrac {\partial l_{cz}} {\partial u} & =\delta (P_{\log \mu}) & =
\left\{\begin{array}{ll}
 c\left.\dfrac {d\mathcal A_{p_0} } {d\zeta}\right|_{\zeta = 0} -\bar c   {\left.\dfrac {d\mathcal A_{p_\infty } } {d\zeta^ {- 1}}\right|_{\zeta = \infty }}, &\\
\qquad \mbox{when $\lambda $ is branched at $ 0,\infty $ and $\zeta ^ 2 =\lambda $;}&\\
 2c\left. \dfrac {d\mathcal A_{p_0} } {d\lambda}\right|_{\lambda = 0}  -2\bar c {\left. \dfrac {d\mathcal A_{p_\infty } } {d\lambda^ {- 1}}\right|_{\lambda = \infty }}, &\\
\qquad\mbox{ when $\lambda $ is unbranched at $ 0,\infty $;}&\end{array}\right.\label {eq:1st}\\
 -\dfrac {1} {2\pi\sqrt {-1}}\Pi_\Psi & =
 \dfrac {\partial l_{cz}} {\partial v} & =\delta (P_{\log \nu}) & =
\left\{\begin{array}{ll}
 c\tau\left.\dfrac {d\mathcal A_{p_0} } {d\zeta}\right|_{\zeta = 0} -\bar c\bar\tau   {\left.\dfrac {d\mathcal A_{p_\infty } } {d\zeta^ {- 1}}\right|_{\zeta = \infty }}, &\\
\qquad\mbox{ when $\lambda $ is branched at $ 0,\infty $ and $\zeta ^ 2 =\lambda $;}&\\
 2c\tau\left. \dfrac {d\mathcal A_{p_0} } {d\lambda}\right|_{\lambda = 0}  - 2\bar c\bar\tau {\left. \dfrac {d\mathcal A_{p_\infty } } {d\lambda^ {- 1}}\right|_{\lambda = \infty }}, &\\
\qquad\mbox{ when $\lambda $ is unbranched at $ 0,\infty $;}&\end{array}\right.\label {eq:2nd}
\end {alignat}
Here $\mathcal A_{p_0} $ denotes the Abel-Jacobi map with base point $ p_0\in\lambda^ {- 1} (0) $ and $ p_\infty =\rho (p_0) $.

Periodicity condition (P1) is equivalent to requiring that the elements of $ H ^ 1 (X,\mathcal O) $ in equations \eqref {eq:1st} and \eqref {eq:2nd} are integral, that is lie in the lattice $ H ^ 1 (X,  \mathbb Z) $. In particular, by the last equality this is determined by just the spectral curve $ X $ and the projection $\lambda $.

\end{theorem}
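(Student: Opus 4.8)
The plan is to establish the three equalities in \eqref{eq:1st} separately (the argument for \eqref{eq:2nd} is identical, with $c$ replaced by $c\tau$), and then to read the periodicity statement off the left-hand and central descriptions. The equality $\partial l_{cz}/\partial u = \delta(P_\Theta)$ is the easy one: since $z = u + \tau v$ with $u,v$ real, the assignment $z\mapsto P^{cz}$ is real-linear with $\partial P^{cz}/\partial u = P^{c} = P_\Theta$ (reading off the explicit values at $p_0,q_0,p_\infty,q_\infty$ together with parts (2) and (4) of Theorem~\ref{theorem:Hitchin}), so linearity of the coboundary gives $\partial l_{cz}/\partial u = \delta(\partial P^{cz}/\partial u) = \delta(P_\Theta)$. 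I note once and for all that $\delta(P_\Theta) = \delta(P_{\Theta_0})$, since $\Theta$ and $\Theta_0$ differ by holomorphic differentials, whose primitives contribute no principal parts; thus only the $(0,1)$-data survive and nothing is lost in passing to $\Theta_0$.

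The substantive step, and the one I expect to be the main obstacle, is the identification $\delta(P_\Theta) = -\tfrac{1}{2\pi\sqrt{-1}}\Pi_\Theta$ under Serre duality $H^1(X,\mathcal O)\cong H^0(X,\mathcal K)^\vee$. I would realise the Serre pairing of $\delta(P_\Theta)$ with a holomorphic differential $\omega = \sum_j a_j\omega^j$ as the residue sum $\sum_{p\in D}\mathrm{Res}_p(f_{\Theta_0}\,\omega)$, where $f_{\Theta_0}$ is a local primitive of $\Theta_0$ (so that $\mathrm{Res}_p(f_{\Theta_0}\omega) = \mathrm{Res}_p(P_\Theta\,\omega)$, as $\omega$ is holomorphic and $P_{\Theta_0}=P_\Theta$). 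Integration by parts on residues gives $\mathrm{Res}_p(f_{\Theta_0}\omega) = -\mathrm{Res}_p(f_\omega\,\Theta_0)$ with $f_\omega=\int\omega$; the Riemann bilinear reciprocity law for the second-kind differential $\Theta_0$ and holomorphic $\omega$, using that $\Theta_0$ has vanishing $A$-periods, then collapses $2\pi\sqrt{-1}\sum_p\mathrm{Res}_p(f_\omega\Theta_0)$ to $\sum_j a_j\int_{B_j}\Theta_0 = \Pi_\Theta(\omega)$. Assembling these yields $\langle\delta(P_\Theta),\omega\rangle = -\tfrac{1}{2\pi\sqrt{-1}}\Pi_\Theta(\omega)$. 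The difficulty here is essentially bookkeeping: fixing the normalisations of the Serre trace and the residue theorem so that the constant $-1/(2\pi\sqrt{-1})$ and the overall sign emerge correctly; the geometric inputs (reciprocity and residue integration by parts) are standard.

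For the Abel-Jacobi description I would evaluate the same residue sum $\sum_{p\in D}\mathrm{Res}_p(P_\Theta\,\omega)$ directly from the explicit principal parts. In the unbranched case, at $p_0$ one has $P_\Theta = c\lambda^{-1}$, giving $\mathrm{Res}_{p_0}(P_\Theta\omega) = c\,(\omega/d\lambda)(p_0) = c\,\langle d\mathcal A_{p_0}/d\lambda|_{0},\omega\rangle$; the hyperelliptic relation $\sigma^*\omega^j = -\omega^j$ makes $\omega/d\lambda$ odd under $\sigma$, so the $q_0=\sigma(p_0)$ term is equal, doubling the contribution to $2c\,\langle d\mathcal A_{p_0}/d\lambda|_{0},\omega\rangle$, while the two points over $\infty$ contribute $-2\bar c\,\langle d\mathcal A_{p_\infty}/d\lambda^{-1}|_{\infty},\omega\rangle$; this is exactly the unbranched right-hand side. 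In the branched case $\lambda^{-1}(0)$ is the single ramification point with local coordinate $\zeta=\sqrt\lambda$ and principal part $c\zeta^{-1}$, producing a single contribution $c\,\langle d\mathcal A_{p_0}/d\zeta|_{0},\omega\rangle$ (no doubling), which matches the branched formula. These are short residue computations given the explicit data.

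Finally, the periodicity statement follows from the first two descriptions together with the reality hypothesis. Set $\beta = \tfrac{1}{2\pi\sqrt{-1}}[\Theta_0]\in H^1(X,\mathbb C)$; the purely-imaginary-period assumption reads $[\Theta_0]\in\sqrt{-1}\,H^1(X,\mathbb R)$, so $\beta$ is a \emph{real} class, and by the previous step $\delta(P_\Theta)$ is precisely its Hodge projection $\pi(\beta)\in H^{0,1}=H^1(X,\mathcal O)$. Since $\pi$ restricts to a real-linear isomorphism $H^1(X,\mathbb R)\xrightarrow{\sim}H^{0,1}$ carrying $H^1(X,\mathbb Z)$ onto the period lattice, a real class $\beta$ lies in $H^1(X,\mathbb Z)$ if and only if $\pi(\beta)$ lies in that lattice. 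Hence $\delta(P_\Theta)$ is integral precisely when every period of $\Theta$ lies in $2\pi\sqrt{-1}\,\mathbb Z$, which is (P1); the same argument applies to $\Psi$. The point requiring care is that integrality of the $(0,1)$-part alone suffices, and this is exactly where the reality condition is indispensable, since it forces the $(1,0)$-part to be the conjugate of the $(0,1)$-part. The concluding remark is then immediate from the Abel-Jacobi equality: its right-hand side is built only from the Abel-Jacobi map of $X$, the fibres $\lambda^{-1}\{0,\infty\}$, and the principal-part constant $c$, all intrinsic to $(X,\lambda)$, with no reference to the eigenline bundle $\mathcal E_0$ or the remaining spectral data, so (P1) depends only on $X$ and $\lambda$.
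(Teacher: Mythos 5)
Your proposal is correct and, for the three displayed equalities, follows essentially the same route as the paper: the equality $\partial l_{cz}/\partial u = \delta(P_\Theta)$ by linearity of the coboundary; the Abel--Jacobi identification by evaluating the explicit principal parts against holomorphic differentials via the residue pairing $([P],\omega)=\sum_q \mathrm{Res}_q(P(q)\,\omega)$, with the same hyperelliptic doubling $\omega(q_0)=-\omega(p_0)$; and the identification of $\Pi_\Theta$ by Riemann reciprocity on the cut polygon, using that $\Theta_0$ has vanishing $A$-periods. Your ``integration by parts on residues'' ($\mathrm{Res}_p(f_{\Theta_0}\omega)=-\mathrm{Res}_p(f_\omega\Theta_0)$) is only a cosmetic reorganisation of the paper's direct computation $\int_{B_j}\Theta_0 = 2\pi\sqrt{-1}\sum_p \mathrm{Res}_p(h^j\Theta_0)$, so the constants and signs come out the same way. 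You do go beyond the paper in two respects. First, you treat the branched case explicitly, where the paper only remarks that it is ``similar.'' Second, and more substantively, the paper's proof stops at the equalities and leaves the equivalence of integrality with (P1) to the surrounding discussion (with a reference to Hitchin's cohomological argument), whereas you close the loop yourself: you observe that under the purely-imaginary-period hypothesis the class $\tfrac{1}{2\pi\sqrt{-1}}[\Theta_0]$ is real, and that the Hodge projection $H^1(X,\mathbb R)\to H^1(X,\mathcal O)$ is a real-linear isomorphism carrying $H^1(X,\mathbb Z)$ onto the relevant lattice, so integrality of the $(0,1)$-part is equivalent to integrality of the full period class. This is exactly where the reality hypothesis (equivalently, positive-definiteness of the imaginary part of the period matrix) is needed, and spelling it out is a genuine gain in completeness over the paper's proof, though not a different method for the heart of the theorem.
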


\begin{proof}
We shall prove the theorem in the case when $\lambda$ is unbranched at $0$ and $\infty$, the branched case being similar. The proof is broken into the verification of the equalities listed below.
\begin {enumerate}
\item 
 $ {\dfrac {\partial l_{c z}} {\partial u} =\delta (P_\Theta),\,\dfrac {\partial l_{cz}} {\partial v} =\delta (P_{\log \nu}) } $ \\
\vspace{5pt}

This is true essentially by definition, since
\[
\dfrac {\partial l_{c z}} {\partial u} =\delta\left (\frac {\partial P ^ {cz}} {\partial u}\right) =\delta (P_\Theta)
\]
and similarly for the other equality.

\vspace{5pt}
\item {$\dfrac {\partial l_{cz}} {\partial z} = 2c
\left. \dfrac {d\mathcal A_{p_0} } {d\lambda}
\right|_{\lambda = 0},\quad\dfrac {\partial l_{cz}} {\partial\bar z} = -2\bar c {
\left.  \dfrac {d\mathcal A_{p_\infty } } {d\lambda^ {- 1}}\right|_{\lambda = \infty }} $}\\
\vspace{5pt}

As
\[
\frac {\partial} {\partial u} =\frac {\partial} {\partial z} +\frac {\partial} {\partial\bar z},\quad\dfrac {\partial} {\partial v} =\tau\dfrac {\partial} {\partial z} +\bar\tau\dfrac {\partial} {\partial\bar z},
\]
proving this is equivalent to demonstrating the statements in \eqref{eq:1st} and \eqref{eq:2nd} relating derivatives of $ l_{cz} $ to derivatives of Abel-Jacobi maps.

It is not difficult to check that
\[
([P],\omega) =\sum_{q\in X}\operatorname {Res}_q (P (q)\omega)
\]
is well-defined and gives a nondegenerate pairing between $\frac {H ^ 0 (X,\mathcal P)} {P (H ^ 0 (X,\mathcal M))}$ and $H ^ 0 (X,\mathcal K)$. 
By the arguments above,
$
\dfrac {\partial l_{P^{cz}}} {\partial z} $
is given by the equivalence class in $\frac {H ^ 0 (\mathcal P)} {P (H ^ 0 (\mathcal M))}$ of the principal part
\[
P (p_0) = c\lambda^ {- 1},\, P (q_0) = - c\lambda^ {- 1}.
\]
Using the above pairing  to consider $\dfrac {\partial l_{cz}} {\partial z}$  as a linear functional on $H ^ 0 (X,\mathcal K)$,
\[
\begin{aligned}
\dfrac {\partial l_{cz}} {\partial z} (\omega) & = \text{Res}_{p_0}\left (\frac {c\omega} {\lambda}\right) -\text{Res}_{q_0}\left (\frac {c\omega} {\lambda}\right)\\
& = c (\omega (p_0) -\omega (q_0))\\
& = 2c\omega (p_0)\\
& = 2 c\left.\dfrac {d} {d\lambda}\right|_{\lambda = 0} \int_{p_0} ^ {\lambda}\omega\\
& = 2 c\left.\dfrac {d} {d\lambda}\right|_{\lambda = 0}\mathcal A_{p_0},
\end{aligned}
\]
where we are using the fact that on a hyperelliptic curve all holomorphic differentials satisfy $\sigma ^*\omega ^ j = -\omega ^ j$.
Similarly
\[
\dfrac {\partial l_{cz}} {\partial\bar z} (\omega) = -2\bar c \left.\dfrac {d} {d\lambda^ {- 1}}\right|_{\lambda = \infty }\mathcal A_{p_\infty }.
\]

\vspace{5pt}
\item {$\Pi_\Theta = - 2\pi\sqrt {-1}
\dfrac {\partial l_{cz}} {\partial u} $, $\Pi_\Psi = - 2\pi\sqrt {-1}\dfrac {\partial l_{cz}} {\partial v} $}
\vspace{5pt}

Until now $ X $ has been any algebraic curve satisfying the symmetries of Theorem~\ref {theorem:Hitchin}. For simplicity we now assume that $ X $ is smooth. 

We can choose representatives $ A_i, B_i$ for our standard homology basis so that each of the curves emanate from a fixed $ x_0\in X
$. Then $\Delta: = X -\bigcup_{i = 1} ^ g (A_i\cup B_i) $ is simply connected, and we may define an entire function on it by
\[
h ^ j (x): =\int_{x_0} ^ x\omega ^ j.
\]
Then since the values of $ h ^ j $ at corresponding points of $ A_i $ and $ A_i^ {- 1} $ differ by the period of $\omega ^ j $ over $ B_j $ and vice versa, (this is the standard reciprocity argument, \cite{GH:94}) 
\begin{align*}
\int_{B_j}\Theta_0 & =\sum_{i = 1} ^ g\left (\int_{A_i}\omega ^ j\int_{B_i}\Theta -\int _{B_i}\omega ^ j\int_{A_j}\Theta\right)\\
    & =\int_{\partial\Delta} h ^ j\Theta_0\\
    & = 2 \pi \sqrt {-1} \sum_{p\in X}\text {Res}_p (h ^ j\Theta_0)\\
    & = - 4 \pi \sqrt {-1} (a _ j c +\overline{a_j}\bar c)
\end{align*}
where in a neighbourhood of $P_0$,
\[
\begin{aligned}
\Theta _0 & = (- c\lambda ^ {-2} +\text {holomorphic}) d\lambda\\
\omega ^ j & = (a_j +\text {higher order terms}) d\lambda.
\end{aligned}
\]
Hence
\begin {align*}
\Pi_\Theta & = 4\pi\sqrt {-1}\left (\bar c   {
\left.  \dfrac {\partial\mathcal A_{p_\infty } } {\partial\lambda^ {- 1}}\right|_{\lambda = \infty }} -c\left. \dfrac {d\mathcal A_{p_0} } {d\lambda}
\right|_{\lambda = 0}\right).
\end{align*}
Similarly,
\[
\Pi_\Psi = 4\pi \sqrt {-1}\left (- c\tau\left. \dfrac {d\mathcal A_{p_0} } {d\lambda}\right|_{\lambda = 0}  + \bar c\bar\tau {\left. \dfrac {d\mathcal A_{p_\infty } } {d\lambda^ {- 1}}\right|_{\lambda = \infty }}\right).\qedhere
\]
\end {enumerate}

\end{proof}

Recall that periodicity condition (P1), namely that $\Theta $ and $\Psi $ have periods lying in $ 2\pi\sqrt {-1}\Z $,  guaranteed that the spectral data corresponded to a harmonic section of an $ SU (2) $-principal bundle over a 2-torus, to obtain a harmonic map we required also periodicity condition (P2). This can also be expressed in terms of integrality of periods but to do so we must pull $\Theta $ and $\Psi $ back to the singular curve $ \hat X $
defined by
\[
y ^ 2 = (\lambda +1) ^ 2 (\lambda -1) ^ 2 a (\lambda)
\]
Otherwise said, $\hat X$ is the curve obtained from $X$ by identifying the two points $p_1, q_1$ in $\lambda^{-1} (1)$ together to form an ordinary double point, and doing likewise with the two points $p_{-1}, q_{-1}$ in $\lambda^ {- 1} (-1)$.

 A line bundle on $\hat X$ can be thought of as a line bundle on $X$, together with $e ^ {c_j}\in\C ^\times $ giving the identification of the points over $p_j, q_j,$ for $j = 1, -1$. As described above, the eigenline bundles $\mathcal E_ z$ are naturally specified with respect to an open cover consisting  of $X_A = X -\lambda^ {- 1}\{0,\infty \}$ and neighbourhoods of the points in $\lambda^{-1}\{0,\infty \}$. These neighbourhoods are taken sufficiently small so that they do not contain any of the points $p_j, q_j$ and so we obtain a linear flow of line bundles $\hat{\mathcal E}_z $ on $\hat X$ by employing the same transition functions.

Periodicity conditions (P1) and (P2) together are now exactly the requirement that (P1) holds for $ \hat X $. Of the various equalities proven in Theorem~\ref{theorem:periodicity}, only for those involving $\Pi_\Theta,\Pi_\Psi $ did our proof utilise the assumption that $ X $ is smooth. We now explain how to modify our interpretation of the periods of $\Theta $ and $\Psi $ as elements of the dual of the space of regular differentials for the case when our curve has a pair of ordinary double points.  


We begin by supplementing our normalised homology basis $ A_j, B_j $ for $ X $ by additional curves which we push forward under the normalisation map
\begin {align*}
\iota:  X &\rightarrow\hat X\\
 (\lambda, y) &\mapsto (\lambda, (\lambda +1) (\lambda -1) y) 
\end {align*}
 to yield a homology basis for $\hat X $. Recall that we represented our basis by curves emanating from a single point $ x_0\in X $ and so that $\rho_*(A_j) \sim - A_j $, $\rho_*(B_j) \sim B_j\;\mbox{mod}\;\langle A_1,\ldots, A_g \rangle $.
For $ k =\pm  1 $ choose an embedded curve $\gamma_k $ from  $ p_k $ to $ q_k $ not intersecting any $ A_j, B_j $ and such that $\rho_*\gamma_k \sim-\gamma_k $.


The regular differentials on $\hat X $ correspond to holomorphic differentials on $ X $ together with meromorphic differentials whose only singularities are simple poles  at the points $ p_1, q_1 $ or $ p_{-1}, q_{-1} $ or both and satisfying
\[
\residue_{p_k}\omega = -\residue_{q_k}\omega\quad\text { for } k = 1, 2.
\]
We take a normalised basis $ H ^ 0 (X,\mathcal K) $ represented by the differentials $\omega ^ 1,\ldots,\omega ^ g $ satisfying $\int_{A _ j}\omega ^ i =\delta ^ i_j $. Define $\eta ^ k $ for $ k =\pm 1 $ to be the meromorphic differential whose only singularities are simple poles at $ p_k, q_k $ such that $\int_{A_j}\eta ^ k = 0 $ for $ j = 1,\ldots, g $ and with residue $ \frac {1} {2\pi\sqrt {- 1}} $ and $ - \frac {1} {2\pi\sqrt {- 1}}  $ at $ p_k $ and $ q_k $ respectively. Assume that $\Theta_0 $, $\Psi_0 $ are normalised as above, that is that their $ A_j $-periods vanish. Then define $\hat\Pi_\Theta,\hat\Pi_\Psi\in H ^ 0 (\hat X,\mathcal K) ^\vee $ by
\begin{align*}
\hat\Pi_\Theta (\omega ^ j) &=\int_{B_j}\Theta_0,\quad\hat\Pi_{\Theta_0} (\eta ^ k) =\int_{\gamma_k }\Theta_0\text { and }\\
\hat\Pi_\Psi(\omega ^ j) &=\int_{B_j}\Psi_0,\quad\hat\Pi_\Psi(\eta ^ k) =\int_{\gamma_k }\Psi_0\quad\text { for } j = 1,\ldots, g, k =\pm1.
\end{align*}
\begin {theorem}\label {theorem:singular}
Let $ (X,\lambda,\Theta,\Psi)$ be smooth spectral data satisfying (\ref{item:first})--(\ref{item:last}) of Theorem ~\ref {theorem:Hitchin} and such that $\Theta $ and $\Psi $ have purely imaginary periods. Let $\hat X $ be the curve obtained from $X$ by identifying the two points $p_1, q_1$ in $\lambda^{-1} (1)$ together to form an ordinary double point, and doing likewise with the two points $p_{-1}, q_{-1}$ in $\lambda^ {- 1} (-1)$. Then the statement of Theorem ~\ref {theorem:periodicity} holds also for $\hat X $, and periodicity condition (P2) is equivalent to the requirement that this pair of elements of $ H ^ 1 (\hat X,\mathcal O) $ are integral with respect to the lattice $ H_1 (\hat X, 2\pi\sqrt {-1}\Z) $.\end {theorem}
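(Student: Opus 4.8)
The plan is to establish the theorem in two stages: first that every equality of Theorem~\ref{theorem:periodicity} persists on the nodal curve $\hat X$ once $\Pi_\Theta,\Pi_\Psi$ are replaced by $\hat\Pi_\Theta,\hat\Pi_\Psi$, and second that integrality of the resulting classes over $H_1(\hat X,2\pi\sqrt{-1}\Z)$ is exactly the combined condition (P1) and (P2), of which (P2) is the new ingredient contributed by the nodes. The guiding observation is that the nodes $p_{\pm1},q_{\pm1}$ lie over $\lambda=\pm1$, away from $D=\lambda^{-1}(0+\infty)$, so the line bundles $\hat{\mathcal E}_z$ are built from the \emph{same} transition functions as on $X$ and the coboundary map $\delta$, the principal parts $P_\Theta,P_\Psi$, and the residue pairing all involve only data at $p_0,q_0,p_\infty,q_\infty$. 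Consequently the equalities $\partial l_{cz}/\partial u=\delta(P_\Theta)$, the analogous statement for $\partial/\partial v$, and the identifications with derivatives of Abel--Jacobi maps transfer \emph{verbatim}, now read against the dualizing sheaf $\mathcal K$ of $\hat X$ and Serre duality $H^1(\hat X,\mathcal O)\cong H^0(\hat X,\mathcal K)^\vee$. Only the identities relating these to $\hat\Pi_\Theta,\hat\Pi_\Psi$ require a fresh argument.

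For those identities I would re-run the reciprocity argument of step (3) of the proof of Theorem~\ref{theorem:periodicity} on the normalization $X$, cut now not only along the standard basis $A_j,B_j$ but also along the arcs $\gamma_1,\gamma_{-1}$. Testing against the basis $\omega^1,\dots,\omega^g,\eta^1,\eta^{-1}$ of $H^0(\hat X,\mathcal K)$ splits into two cases. For each $\omega^j$, which is holomorphic at the nodes, the primitive $h^j=\int_{x_0}^{\,\cdot}\omega^j$ extends continuously across $\gamma_k$, so those cuts contribute nothing and the computation reproduces $\hat\Pi_\Theta(\omega^j)=\int_{B_j}\Theta_0=-2\pi\sqrt{-1}\,(\partial l_{cz}/\partial u)(\omega^j)$ exactly as in the smooth case. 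For $\eta^k$ the primitive $\int\eta^k$ is multivalued: crossing $\gamma_k$ it jumps by $2\pi\sqrt{-1}\,\residue_{p_k}\eta^k=1$, which is precisely why the residues were normalized to $\tfrac{1}{2\pi\sqrt{-1}}$. Hence the two banks of the $\gamma_k$-cut contribute exactly $\int_{\gamma_k}\Theta_0=\hat\Pi_\Theta(\eta^k)$ to the boundary integral, while the $A_j,B_j$-terms drop out because both $\eta^k$ and $\Theta_0$ have vanishing $A$-periods; the residue theorem then evaluates the same boundary integral as the sum of residues of the primitive against $\Theta_0$ at $p_0,q_0,p_\infty,q_\infty$, reproducing $-2\pi\sqrt{-1}\,(\partial l_{cz}/\partial u)(\eta^k)$ by the residue computation of step (2). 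The same argument with $\Theta_0$ replaced by $\Psi_0$ gives the companion identity for $\hat\Pi_\Psi$.

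With the extension in hand, integrality of $\hat\Pi_\Theta,\hat\Pi_\Psi$ with respect to $H_1(\hat X,2\pi\sqrt{-1}\Z)$ is the requirement that every period of $\Theta_0,\Psi_0$ over $H_1(\hat X,\Z)$ lie in $2\pi\sqrt{-1}\Z$. The lattice $H_1(\hat X,\Z)$ is generated by the images of $A_j,B_j$ together with the two loops $\hat\gamma_k$ obtained from $\gamma_k$ under the identification $p_k\sim q_k$. Pairing against the $A_j,B_j$ returns exactly (P1): the $A$-periods vanish and the $B$-periods are the $\int_{B_j}\Theta_0$. Pairing against $\hat\gamma_k$ returns $\int_{\gamma_k}\Theta_0=\int_{\gamma_k}\Theta$, where the equality uses that the hypotheses force the real numbers $s_j=\int_{A_j}\Theta$ to vanish, being simultaneously real and purely imaginary, so $\Theta_0=\Theta$; this is precisely (P2). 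Thus integrality over the full lattice is (P1) together with (P2), and since Theorem~\ref{theorem:periodicity} already identifies (P1) with integrality over the sublattice $H_1(X,\Z)$, the content added by the node loops $\hat\gamma_1,\hat\gamma_{-1}$ is exactly (P2), in agreement with the reformulation that (P1) and (P2) together amount to (P1) for $\hat X$.

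I expect the reciprocity step for the third-kind differentials $\eta^k$ to be the main obstacle: one must carefully account for the logarithmic multivaluedness of $\int\eta^k$ introduced by its residues, verify that the extra cuts $\gamma_k$ produce the boundary term $\int_{\gamma_k}\Theta_0$ with exactly the normalization dictated by $\residue_{p_k}\eta^k=\tfrac{1}{2\pi\sqrt{-1}}$, and confirm there are no stray residue contributions at the nodes, where $\Theta_0$ is holomorphic. A subsidiary point is to check that $\eta^k$ may be taken anti-invariant under the hyperelliptic involution $\sigma$ and compatible with the real structure $\rho$, so that the evaluations at $p_0,q_0$ combine as $2c\,\eta^k(p_0)$ just as for the holomorphic $\omega^j$ and so that $\hat\Pi_\Theta,\hat\Pi_\Psi$ are the real functionals the lattice statement requires.
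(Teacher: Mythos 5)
Your proposal is correct and follows essentially the same route as the paper's proof: the equalities not involving $\hat\Pi_\Theta,\hat\Pi_\Psi$ carry over because they only use data at $\lambda^{-1}\{0,\infty\}$, reciprocity on the cut polygon handles the $\omega^j$ exactly as in the smooth case, and for the $\eta^k$ the paper likewise cuts along $\gamma_k$, uses the primitive $l_k=\int\eta^k$ with jump $2\pi\sqrt{-1}\,\residue_{p_k}\eta^k=1$ across the cut, kills the cross terms by the vanishing $A$-periods, and applies the residue theorem at the poles of $\Theta$ to get the Abel--Jacobi evaluations. Your explicit description of $H_1(\hat X,\Z)$ as generated by $A_j,B_j$ and the node loops, identifying the added integrality condition with (P2), matches the paper's observation that (P1) and (P2) together are exactly (P1) for $\hat X$.
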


\begin {proof}
As noted above, the equalities not involving $\hat\Pi_\Theta $ and $\hat\Pi_\Psi $ were already established in the proof of Theorem~\ref {theorem:periodicity}. We argue now that these are given by the Abel-Jacobi derivatives stated above for the singular curve $\hat X $. As before we assume that $\lambda $ is not branched over $ 0 $ and $\infty $.

Denote by $\Delta $ the simply connected region formed by cutting $ X $ along the homology basis $ A_1,\ldots, A_g, B_1, \ldots , B_g $ 
 specified above. Then 
exactly as in the proof of Theorem ~\ref {theorem:periodicity}, reciprocity yields that
\[
\hat\Pi_\Theta (\omega ^ j) =\int_{B_j}\Theta = 4\pi\sqrt {-1}\left (\bar c   {
\left.  \dfrac {d\mathcal A_{p_\infty } } {d\lambda^ {- 1}} (\omega ^ j)\right|_{\lambda = \infty }}-c\left. \dfrac {d\mathcal A_{p_0} } {d\lambda} (\omega ^ j)
\right|_{\lambda = 0}\right) 
\]
Furthermore, set $\hat\Delta_k =\Delta\setminus\gamma_k $,  fix $ x_0\in\hat\Delta_k $ and define $ l _k (x) =\int_{x_0} ^ x\eta ^k $. 
Then with $\gamma_k ^ + $ and $\gamma_k ^ - $ denoting either side of the split left by the deletion of $\gamma_k $ as shown in Figure~\ref {figure:cuts},
\begin{figure}[htb]
\centering
\includegraphics
{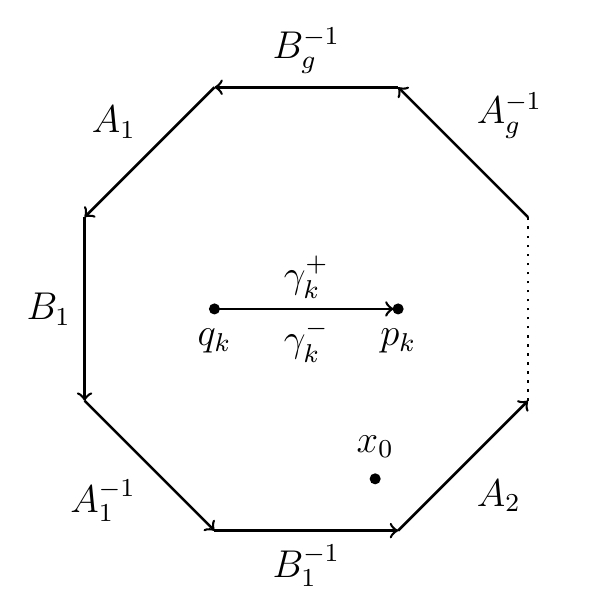}
\caption{Polygon with cut}
\label{figure:cuts}
\end{figure}

\begin {align*}
2\pi\sqrt {-1}\sum_{p\in\hat \Delta_k}\residue_p (l_k\Theta) & =
\int_{\partial\hat\Delta}  l _k\Theta \\
&=\sum_{j = 1} ^ g\left (\int_{A _ j}\eta ^ k\int_{B_j}\Theta -\int_{B_j}\eta ^ k\int_{A_j}\Theta\right) +
\int_{\gamma_k ^ +} l_k\Theta -\int_{\gamma_k ^ -} l_k \Theta\\
& =\int_{\gamma_k ^ +} l_k\Theta -\int_{\gamma_k ^ -} l_k \Theta
\end {align*}
since $\eta ^ k $ and $\Theta $ are both normalised to have vanishing $ A $-periods. But for $p ^ +\in\gamma_k ^ + $ and the corresponding $ p ^ -\in\gamma_k ^ - $,
\[
l_k (p ^ +) - l_k (p ^ -) = 2\pi\sqrt {-1}\residue_{p_k}\eta ^ k = 1,
\]
so
\begin {align*}
\Pi_\Theta (\eta ^ k) =\int_\gamma\Theta & = -2\pi\sqrt {-1}\sum_{p\in\hat\Delta_k}\residue_{p} (l_k\Theta)\\
& = 4\pi\sqrt {-1}\left (\bar c   {
\left.  \dfrac {\partial\mathcal A_{p_\infty } } {\partial\lambda^ {- 1}} (\eta ^ k)\right|_{\lambda = \infty }} -c\left. \dfrac {d\mathcal A_{p_0} } {d\lambda} (\eta ^ k)
\right|_{\lambda = 0}\right).
\end {align*}
Similarly,
\[
\Pi_\Psi = 4\pi \sqrt {-1}\left (- c\tau\left. \dfrac {d\mathcal A_{p_0} } {d\lambda}\right|_{\lambda = 0}  + \bar c\bar\tau {\left. \dfrac {d\mathcal A_{p_\infty } } {d\lambda^ {- 1}}\right|_{\lambda = \infty }}\right).\qedhere
\]
\end{proof}

\section {Spectral curves and their applications} 
\label {section:Lax}

\subsection {Finite-type harmonic maps from a surface to a symmetric space}
We begin by extending the zero-curvature description of harmonic maps into symmetric spaces, as these arise more frequently in geometric applications then do Lie groups. We then introduce the notion of harmonic maps of the plane of {\it finite-type} and explain how a spectral curve construction can be given for these. Certainly not all harmonic maps of the plane are finite-type, but in many situations it has been shown that all doubly-periodic such maps are either totally isotropic (and  given by holomorphic data in terms of a Weierstrass-type representation) or of finite-type and expressible in terms of spectral curve data.
Harmonic maps into Lie groups and symmetric spaces can be studied in terms of one another due to the fact that there is a natural totally geodesic immersion from a symmetric space into the corresponding group, namely the Cartan immersion. As such we could discuss finite-type solutions at either the Lie group or symmetric space level, we choose to do  the latter due to the aforementioned prevalence of symmetric spaces in geometric applications and also because one can always regard a Lie group $ G $ as a symmetric space $ (G\times G)/G $.

A homogeneous space $ G/H $ is a \emph{symmetric space} if there exists an involution $\sigma: G\rightarrow G $ such that 
\[
(G ^\sigma)_0\subset H\subset G ^\sigma
\] where $ G ^\sigma $ denotes the fixed point set of $\sigma $, and $ (G ^\sigma)_0 $ the identity component of $ G ^\sigma $.

Recall that a map $ \iota: N\rightarrow P $ between pseudo-Riemannian manifolds is \emph{totally geodesic} when it sends geodesics to geodesics, or equivalently when
\[
\text {second fundamental form of $ \iota $}= (\nabla (d\iota)) = 0.
\]
 The Cartan map of a symmetric space is given by
\begin {align*}
\iota:\;\; & G/H\rightarrow G\\
& gH\mapsto \sigma (g ) g^{-1}.
\end {align*}
The following result is standard when $ G $ is compact and hence has a bi-invariant Riemannian metric, and is not difficult to extend to the pseudo-Riemannian case (see \cite {CT:11} for details).
\begin {theorem}
Let $ G $ be a semisimple  Lie group with bi-invariant (pseudo)-metric $\langle \cdot,\cdot \rangle $ and $ G/H $ a symmetric space with respect to the involution $\sigma: G\rightarrow G $.
Then $\iota: gH\mapsto \sigma (g)g ^ {- 1} $ is a totally geodesic immersion $ G/H\rightarrow G $.
If $ H = G ^\sigma $, then $\iota$ is additionally an embedding.
\end {theorem}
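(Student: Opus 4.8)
The plan is to reduce everything to a single one-parameter-subgroup computation and then propagate it by the homogeneity of the situation. First I would record the reductive decomposition $\g = \mathfrak{h}\oplus\mathfrak{m}$ into the $(+1)$- and $(-1)$-eigenspaces of $d\sigma_e$, so that $\mathfrak{h}$ is the Lie algebra of $G^\sigma$ (and hence of $H$, since $(G^\sigma)_0\subset H\subset G^\sigma$) and $T_{eH}(G/H)$ is naturally identified with $\mathfrak{m}$. Well-definedness of $\iota$ is the first thing to check: if $g' = gh$ with $h\in H$ then $\sigma(g')g'^{-1} = \sigma(g)\sigma(h)h^{-1}g^{-1}$, so I need $\sigma(h) = h$, which is exactly the hypothesis $H\subset G^\sigma$. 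Next I would verify the equivariance $\iota(gg'H) = \sigma(g)\,\iota(g'H)\,g^{-1}$ with respect to left translation on $G/H$ and the twisted conjugation $x\mapsto\sigma(g)xg^{-1}$ on $G$; since both of these actions are by isometries of the bi-invariant metric (translations always, and $\sigma$ because an automorphism preserves the Killing form), it suffices to establish the immersion and totally-geodesic properties at the single point $eH$.

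The crux is the computation for $X\in\mathfrak{m}$:
\[
\iota\bigl(\exp(tX)H\bigr) = \sigma(\exp tX)\exp(-tX) = \exp(t\,d\sigma(X))\exp(-tX) = \exp(-2tX),
\]
using $d\sigma(X) = -X$. Differentiating at $t=0$ gives $d\iota_{eH}(X) = -2X$, which is injective on $\mathfrak{m}$, so $\iota$ is an immersion at $eH$ and hence, by equivariance, everywhere. For the totally-geodesic claim I would invoke the standard description of geodesics of a symmetric space: the geodesics of $G/H$ through $eH$ are exactly $t\mapsto\exp(tX)H$ with $X\in\mathfrak{m}$, while the geodesics of $G$ through $e$ are the one-parameter subgroups. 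The displayed identity shows $\iota$ carries each such geodesic to the one-parameter subgroup $\exp(-2tX)$, i.e. to a geodesic; transporting this statement to an arbitrary point by the isometric equivariance shows $\iota$ sends every geodesic to a geodesic, which by the characterisation recalled just before the theorem is equivalent to $\nabla(d\iota)=0$. Equivalently, one can observe that the image lands in the fixed-point set of the isometric involution $\Phi(x)=\sigma(x)^{-1}$, whose identity component is a totally geodesic submanifold of $G$, and that $\iota$ is a homothety of factor $2$ onto it.

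Finally, when $H = G^\sigma$ I would upgrade the immersion to an embedding. Injectivity is immediate from the computation $\iota(g_1H)=\iota(g_2H)\Rightarrow\sigma(g_2^{-1}g_1)=g_2^{-1}g_1\Rightarrow g_2^{-1}g_1\in G^\sigma = H$, so $g_1H=g_2H$. Since $\iota$ is simultaneously an injective immersion and a local diffeomorphism onto the totally geodesic submanifold identified above (its differential maps $\mathfrak{m}$ isomorphically onto the matching tangent space), it is an open continuous bijection onto its image and therefore a homeomorphism onto it, yielding an embedding. The step I expect to require the most care is not any of these formal manipulations but the pseudo-Riemannian bookkeeping flagged implicitly by the hypotheses: verifying that $\sigma$ genuinely preserves the chosen bi-invariant pseudo-metric and that the induced form on $\mathfrak{m}$ is well behaved enough for the geodesic description and the fixed-point-set argument to go through without the positive-definiteness one usually relies on. For the compact Riemannian case all of these points are automatic, and I would treat the general pseudo-Riemannian refinements by citing \cite{CT:11}.
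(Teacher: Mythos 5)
The paper itself contains no proof of this theorem: it is stated as a standard result, with the remark that it is "not difficult to extend to the pseudo-Riemannian case" and a pointer to \cite{CT:11} for details. So there is no in-paper argument to compare against line by line; what can be said is that your proof is the standard one and is essentially correct. The well-definedness check (using only $H\subset G^\sigma$), the equivariance $\iota(gg'H)=\sigma(g)\,\iota(g'H)\,g^{-1}$, the key computation $\iota(\exp(tX)H)=\exp(-2tX)$ for $X\in\mathfrak{m}$ giving $d\iota_{eH}=-2\,\mathrm{id}$, the transport of geodesics $\exp(tX)H\mapsto\exp(-2tX)$ (which matches the characterisation of totally geodesic maps the paper recalls immediately before the theorem), and the injectivity computation when $H=G^\sigma$ are all sound.

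Two refinements are worth noting. First, the isometry of the twisted conjugation $x\mapsto\sigma(g)xg^{-1}$ requires nothing about $\sigma$ at all: it is a left translation composed with a right translation, both isometries of any bi-invariant metric. Where $\sigma$-invariance of the metric genuinely enters is (a) your alternative argument via the fixed-point set of $\Phi(x)=\sigma(x)^{-1}$, and (b) the non-degeneracy of the induced form on $\mathfrak{m}$, needed so that $G/H$ is pseudo-Riemannian and its geodesics through $eH$ are the curves $\exp(tX)H$. For the Killing form this is automatic (eigenspaces of an isometric involution for distinct eigenvalues are orthogonal), but a general bi-invariant pseudo-metric on a semisimple group need not be $\sigma$-invariant; one should either assume this or replace the metric by its $\sigma$-average. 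You flag exactly this bookkeeping and defer it to \cite{CT:11}, which is what the paper does too. Second, your passage from injective immersion to embedding is valid only because you route it through the fixed-point set of $\Phi$, which is a closed embedded submanifold of the same dimension onto which $\iota$ is a local diffeomorphism; an injective immersion alone would not suffice, so that step of your argument is not a redundancy but is actually load-bearing.
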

Since the Cartan immersion $\iota: G/H\rightarrow G $ is totally geodesic, a smooth map $ f: M\rightarrow G/H $ is harmonic if and only if the composition $\tilde{f} = \iota\circ f: M\rightarrow G $ is harmonic. Writing $\Phi = \tilde {f} ^ {- 1} d\tilde {f} $, this is equivalent to
\[
d*\Phi = 0.
\]
It is more useful to phrase this in terms of a lift $ F: U\subset M\rightarrow G $ of $ f $ on a simply connected open set $ U $.
The involution $\sigma:\g\rightarrow\g $ gives a splitting
\[
\g =\mathfrak h\oplus \mathfrak m
\]
into the $ (+1) $- and $ (-1) $-eigenspaces spaces of $\sigma $.
Let  $\varphi : =  F^ {- 1} d  F  $ and  then $\varphi  =\varphi _{\mathfrak{h}} +\varphi _{\mathfrak{m}}$ is the decomposition of $\varphi $
 into the eigenspaces of $\sigma $. 

Since $ \tilde { f} =\sigma ( F)  F^ {- 1} $, we have
\[
\Phi =\tilde f^ {- 1} d\tilde f = F\left (\sigma (F)^ {- 1} d (\sigma (F)) - F^{-1}d F \right) F^ {- 1} = -2\mathrm{Ad}_F (\varphi_{\mathfrak{m}}).
\]

It is now straightforward to verify that a harmonic map $ f $ from a simply connected surface into $ G/H $ is equivalent to an $S ^ 1 $-family of $\g $-valued 1-forms
\[
\varphi _\lambda =\lambda\varphi'_\mathfrak{m} +\varphi_\mathfrak{h} +\lambda^ {- 1}{\varphi''}_\mathfrak{m}
\]
each satisfying the Maurer-Cartan equation
\begin {equation}\label {eq:symmetric}
d\varphi_\lambda +\frac 12 [\varphi_\lambda\wedge\varphi_\lambda ] = 0,
\end {equation}
where $\varphi = F^ {- 1} dF $ for $ F $ a lift of $ f $ into $ G $.

Some solutions to \eqref{eq:symmetric} may be obtained merely by solving a pair of commuting ordinary differential equations on a finite dimensional loop algebra. It is upon these {\em finite-type} solutions that we shall focus and we begin by explaining how harmonic maps into symmetric spaces can be described in this straightforward way.

Let $\Omega\g $ be the loop algebra 
\[
\Omega\g: =\{\xi: S ^ 1\rightarrow\g\mid \xi\text { is smooth.}  \}.
\]
For studying maps into symmetric spaces it is helpful to consider the twisted loop group
\[
\Omega ^\sigma G =\{\gamma: S ^ 1\rightarrow G:\gamma (-\lambda)\} =\sigma (\gamma (\lambda))\}
\]
 and corresponding twisted loop algebra $\Omega ^\sigma\g $. 
The (possibly doubly infinite) Laurent expansion
\[
\xi(\lambda) =\sum_{j} \xi_j \lambda  ^ j,\quad\xi_{\text even}\in\mathfrak h ^\C,\quad\xi_{\text odd}\in\mathfrak m ^\C,\quad\xi_{- j} =\bar\xi_j
\]
allows us to filtrate $\Omega ^\sigma\g ^\C $ by finite-dimensional subspaces
\[
\Omega ^\sigma_d =\{\xi\in\Omega\g\mid \xi_j = 0\text { whenever }\left|j\right| >d\}.
\]

Choose a Cartan subalgebra $\mathfrak t$ of $\g $ such that $\mathfrak t\subset\mathfrak h $
and recall that a non-zero $\alpha \in (\mathfrak{t}^\C)^*$ is a {\em root}  with corresponding {\em root space} $\mathcal{G}^{\alpha}\subset\g ^\C $ 
 if  $[X_1, X_2 ]=\alpha(X_1) X_2 $ for all $X_1\in \mathfrak{t}$ and $ X_2\in \mathcal{G}^{\alpha}$. 
Fix also a set of {\em simple roots}, that is roots $\alpha_1, \ldots, \alpha_N $  such that every root $\alpha $ can be written uniquely as
$$\alpha=\sum_{j=1}^{N} m_j \alpha_j,$$ where the $m_j$ are either all positive integers or all negative integers. 

The roots act also on $\mathfrak k ^\C $ and writing $ \mathfrak n $ for the positive root spaces of $\mathfrak h ^\C $ with respect to this choice of simple roots, we obtain an Iwasawa decomposition
\[
\mathfrak{h}^\C =\mathfrak{n}\oplus\mathfrak{t}^\C\oplus\bar{\mathfrak {n}}.
\]
Define $ r:\mathfrak{h} ^\C\rightarrow\mathfrak{h} ^\C $ by
\[
r (\eta) =\eta_{\bar{\mathfrak{n}}} +\frac 12\eta_{\mathfrak k}
\]
and note that
\[
(\eta dz)_{\mathfrak h} = r (\eta) dz +\overline {r (\eta)} d\bar z.
\]
The following result appears in \cite[Theorem 2.5] {BP:94}; in this context the reader is also referred to  \cite {OU:02}.
\begin {theorem} 
\label {theorem:finite}
Suppose $ G $ is a semisimple Lie group with a bi-invariant pseudo-metric and $ G/H $ is a symmetric space. If $ d $ is a positive odd integer and $\xi:\R ^ 2\rightarrow\Omega ^\sigma_d $ satisfies the Lax pair
\[
\dfrac {\partial\xi} {\partial z} = [\xi,\lambda\xi_d + r (\xi_{d -1})]
\]
then
there exists  $ F:\R ^ 2\rightarrow G $ unique up to left translation, such that the map  $ f:\R ^ 2\rightarrow G/H $ 
framed by $ F $ is harmonic and $\varphi = F^ {- 1} dF $ satisfies
\begin {equation}\label {eq:pair}
\varphi'_{\mathfrak m} =\xi_d,\quad\varphi'_{\mathfrak h} =\xi_{d -1}.
\end {equation}
If $ G $ is compact then global solutions of the Lax pair exist for any choice of initial condition.
\end {theorem}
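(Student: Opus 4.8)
The plan is to manufacture from the polynomial Killing field $\xi$ a $\C^\times$-family of flat connections of the shape appearing in \eqref{eq:symmetric}, to integrate it using Cartan's lemma, and then to invoke the zero-curvature characterisation of harmonicity. Observe first that the Lax operator is exactly the $dz$-coefficient we need: setting $A = \lambda\xi_d + r(\xi_{d-1})$, the Lax pair reads $\partial\xi/\partial z = [\xi, A]$. For $\lambda\in S^1$ I would define the $\g$-valued $1$-form
\[
\alpha_\lambda = \bigl(\lambda\xi_d + r(\xi_{d-1})\bigr)\,dz + \bigl(\lambda^{-1}\bar\xi_d + \overline{r(\xi_{d-1})}\bigr)\,d\bar z,
\]
using $\overline{\lambda\xi_d} = \lambda^{-1}\bar\xi_d$ on the unit circle. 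Its $\mathfrak m$-component is $\lambda\xi_d\,dz + \lambda^{-1}\bar\xi_d\,d\bar z$ and its $\mathfrak h$-component is $r(\xi_{d-1})\,dz + \overline{r(\xi_{d-1})}\,d\bar z = (\xi_{d-1}\,dz)_{\mathfrak h}$; by the defining property of $r$ this is precisely the form $\lambda\varphi'_{\mathfrak m} + \varphi_{\mathfrak h} + \lambda^{-1}\varphi''_{\mathfrak m}$ with $\varphi'_{\mathfrak m} = \xi_d$ and $\varphi'_{\mathfrak h} = \xi_{d-1}$, which already pins down the two identities in \eqref{eq:pair}.

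Next I would verify the Maurer--Cartan equation \eqref{eq:symmetric} for $\alpha_\lambda$. Writing $\alpha_\lambda = A\,dz + \bar A\,d\bar z$, flatness is the single equation $\partial_z\bar A - \partial_{\bar z}A + [A,\bar A] = 0$ obtained from the coefficient of $dz\wedge d\bar z$. Collecting powers of $\lambda$ (degrees $+1,0,-1$), these reduce to the top components of the Lax pair $\partial_z\xi = [\xi, A]$ and of its complex conjugate $\partial_{\bar z}\xi = [\xi, \bar A]$, the latter holding because the reality condition $\xi_{-j} = \bar\xi_j$ makes $\xi$ a real loop. For instance the $\lambda^{+1}$-part of flatness is $-\partial_{\bar z}\xi_d + [\xi_d, \overline{r(\xi_{d-1})}] = 0$, which is exactly the $\lambda^d$-component of the conjugate Lax pair once one uses $\xi_{d+1} = 0$. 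This degree-by-degree reconciliation, together with the role of the $r$-map in matching the $\mathfrak h$-parts, is the computational crux and the step I expect to be the main obstacle. Granting it, $\alpha_\lambda$ is flat for all $\lambda\in S^1$, and hence, by holomorphic continuation in $\lambda$, for all $\lambda\in\C^\times$.

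Since $\R^2$ is simply connected, Cartan's lemma integrates each $\alpha_\lambda$ to a map $F_\lambda\colon\R^2\to G$ with $F_\lambda^{-1}dF_\lambda = \alpha_\lambda$, unique up to left translation. Taking $F = F_1$ gives $\varphi = F^{-1}dF = \alpha_1$, whose associated loop $\varphi_\lambda = \alpha_\lambda$ has the required shape and satisfies \eqref{eq:symmetric} for every $\lambda$; by the equivalence established just before \eqref{eq:symmetric}, the map $f$ framed by $F$ is a harmonic map into $G/H$, with $\varphi'_{\mathfrak m} = \xi_d$ and $\varphi'_{\mathfrak h} = \xi_{d-1}$ by construction. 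Uniqueness of $F$ up to left translation is inherited directly from Cartan's lemma.

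Finally, for the global existence clause when $G$ is compact, I would argue that the Lax pair and its conjugate define, after splitting $\partial_z$ into real and imaginary parts, two real polynomial vector fields on the \emph{finite-dimensional} space $\Omega^\sigma_d$. First these are tangent to $\Omega^\sigma_d$: the only potential term of $[\xi, A]$ of degree exceeding $d$ is the $\lambda^{d+1}$-coefficient $[\xi_d,\xi_d] = 0$, and by reality the terms of degree below $-d$ vanish likewise, which is precisely why $d$ is odd and $A$ has these top terms. Each flow has Lax form $\dot\xi = [\xi, B]$ with $B$ a real loop, so it preserves every $\Ad$-invariant quantity; in particular it preserves the invariant inner product $\langle\langle\xi,\xi\rangle\rangle = \sum_j\langle\xi_j,\bar\xi_j\rangle$ on the loop algebra, since $\langle[\xi,B],\xi\rangle = \langle B,[\xi,\xi]\rangle = 0$ pointwise in $\lambda$. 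When $G$ is compact the Killing form is definite, so this is a proper, positive-definite function on $\Omega^\sigma_d$; every trajectory is then confined to a compact sphere and the flows are complete, while they commute by the same zero-curvature computation as above. Hence $\xi$ integrates to a map $\R^2\to\Omega^\sigma_d$ for any initial condition. The obstacle peculiar to this last part is pinning down this coercive conserved quantity, whose definiteness is exactly what compactness supplies.
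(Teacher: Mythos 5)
Your construction of $\alpha_\lambda$, the degree-by-degree verification of the Maurer--Cartan equation from the Lax equation and its conjugate, and the integration via Cartan's lemma are correct and are essentially the paper's own argument: the paper likewise notes that the reality condition $\xi_{-j}=\bar\xi_j$ makes the conjugate equation $\partial\xi/\partial\bar z = -[\xi,\lambda^{-1}\xi_{-d}+\overline{r(\xi_{d-1})}]$ automatic, observes $d\xi=[\xi,\varphi_\lambda]$, and reads off the components of \eqref{eq:symmetric}. Your conserved-quantity argument for completeness is also the paper's: ad-invariance of the $L^2$ inner product gives $\frac{d}{dx}\langle\xi,\xi\rangle = -2\langle[\xi,\xi],\hat X(\xi)\rangle = 0$, so trajectories lie on spheres, which are compact precisely when the metric is definite, i.e.\ when $G$ is compact.

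The gap is in the single clause ``while they commute by the same zero-curvature computation as above.'' That computation presupposes a map $\xi(z,\bar z)$ that \emph{already} solves both equations simultaneously; it shows that any such solution yields a flat family $\alpha_\lambda$. It cannot be used to prove that the two vector fields $X,Y$ on $\Omega^\sigma_d$ commute, because the existence of a simultaneous solution through an arbitrary initial point is exactly what commutativity is needed to establish --- as written, the argument is circular. Commutativity, i.e.\ $[Z,\bar Z]=0$ for $Z(\xi)=\lambda\xi_d + r(\xi_{d-1})$, is an identity of vector fields on the finite-dimensional space $\Omega^\sigma_d$ that must be checked directly, and it is precisely here that the specific definition of $r$ attached to the Iwasawa decomposition $\mathfrak{h}^\C=\mathfrak{n}\oplus\mathfrak{t}^\C\oplus\bar{\mathfrak{n}}$ enters. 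The paper proves it by a ``straightforward but tedious'' computation using the Jacobi identity together with the inclusions $[\mathfrak{t}^\C,\mathfrak{n}]\subset\mathfrak{n}$, $[\mathfrak{n},\mathfrak{n}]\subset\mathfrak{n}$, $[\bar{\mathfrak{n}},\bar{\mathfrak{n}}]\subset\bar{\mathfrak{n}}$, which one sees for instance in a Chevalley basis. Without this step you have two complete flows but no $\R^2$-action, hence no solution of the Lax pair for arbitrary initial data; supplying this computation would complete your proof.
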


\newcommand{\p}{\mathfrak{p}}
\begin {proof}
Suppose that $\xi:\R ^ 2\rightarrow\Omega_d $ satisfies  \eqref {eq:pair} and hence also the conjugate equation
\begin {equation}\label {eq:conjugate}
\dfrac {\partial\xi} {\partial\bar z} = - [\xi, \lambda^ {- 1}\xi_{- d} +\overline {r (\xi_{d -1})} ].
\end {equation}
Then
\[
d\xi = [\xi, \phi_\lambda ],
\]
where $\varphi_\lambda = (\lambda\xi_d + r (\xi_{d -1})) d z + (\lambda^ {- 1}\xi_{- d} +\overline {r (\xi_{d -1})}) d\bar z$.
Writing 
\begin {align*}
\varphi & = ( \xi_d + r (\xi_{d-1})) dz + (\xi_{- d} +\overline {r (\xi_{d -1})}) d\bar z\\
& = (\varphi '_{\mathfrak p} +\varphi'_{\mathfrak k}) + (\varphi ''_{\mathfrak p} +\varphi''_{\mathfrak k}),
\end {align*}
 equations \eqref {eq:pair} and \eqref {eq:conjugate} yield
\begin {align}
d\varphi'_\p + [\varphi _{\mathfrak {k}}\wedge\varphi '_\p] & = 0\label {eq:MCg1}\\
d\varphi_{\mathfrak {k}} + \frac 12 [\varphi _{\mathfrak {k}}\wedge\varphi_{\mathfrak {k}}] + [\varphi '_\p\wedge\varphi''_\p] & = 0\nonumber\\
d\varphi''_\p + [\varphi _{\mathfrak {k}}\wedge\varphi ''_\p] & = 0.\nonumber
\end {align}
which are the various components of the Maurer-Cartan equation  \eqref {eq:symmetric}.

Now suppose that $ G $ is compact and note that since the coefficient of $\lambda ^ {d +1} $ on the right hand side of 
\[
Z (\xi) = \frac 12 ( X  (\xi) - i Y  (\xi)) = \lambda\xi_d + r (\xi_{d -1})
\]
 vanishes, this equation defines defines vector fields $ X $, $ Y $ and $Z $  on $\Omega_d $. Taking $\bar Z $ to be the vector field conjugate to $ Z $, then $ X, Y $ commute if and only if  $ [  Z,\bar Z] =  0 $. This follows from a straightforward but tedious computation, using the Jacobi identity as well as the inclusions $ [\mathfrak t ^\C, \mathfrak n]\subset\mathfrak n ,   [\bar{\mathfrak n}, \bar{\mathfrak n}]\subset\bar{\mathfrak n} $, $ [\mathfrak n,\mathfrak n]\subset\mathfrak n $ which one sees for example by taking a Chevalley basis for $\mathfrak k ^\C $.

The flows of the vector fields $ X, Y $ are given by Lax equations $\frac {d\xi} {dx} = [\xi,  \hat X (\xi) ]$ and $\frac {d\xi} {d y} = [\xi, \hat Y (\xi) ]$ for $\hat X,\hat Y :\Omega_d\rightarrow\Omega_d $ and the $  L ^ 2 $ inner product on $\Omega\g $ is ad-invariant. Thus
\begin {align*}
\frac {d} {d x}\langle\xi,\xi\rangle & = 2\langle [\xi,\hat X (\xi) ],\xi\rangle \\
& = -2\langle [\xi,\xi ],\hat X (\xi)\rangle = 0
\end {align*}
and similarly for $ Y $, so the flows evolve on spheres and hence are complete whenever $ G $ is compact.  Therefore this pair of complete commuting vector fields defines an action of $\R ^ 2 $ on $\Omega_d$ via
\[
(x, y)\cdot \xi = X ^ {x}_1\circ X ^ {y}_2 (\xi).
\]
We see that  for any $\psi_0\in\Omega_d$, we may define $\xi:\R ^ 2\rightarrow\Omega_d$ by
\[
\xi (x, y) = (x, y)\cdot\xi_0
\]
and the $\xi $ so defined satisfies \eqref{eq:pair}.\end {proof}


Given a harmonic map $ f:\R ^ 2\rightarrow G/H $, we may define a family of flat connections $\phi_\lambda $ as above and then a solution $\xi: S ^1\rightarrow\Omega_d $ to the Lax pair
\[
\frac {\partial\xi} {\partial z} = [\xi,\phi_\lambda']
\]
is called a  {\em polynomial Killing field} and if furthermore $ \xi_d + r (\xi_{d -1}) =\phi_{\bar z} $ then the polynomial Killing field is said to be {\em adapted}.  As Theorem~\ref{theorem:finite} demonstrates, the harmonic map $ f $ can be recovered from an adapted polynomial killing field. 
\begin {definition}
Harmonic maps $ f:\R ^ 2\rightarrow G/H $ arising from the above construction, or equivalently those possessing an adapted polynomial Killing field are said to be of \emph{finite-type}.
\end {definition}
Harmonic maps into groups can be analysed analogously or by considering the group as a symmetric space.
Harmonic maps of finite type can thus be constructed by remarkably more simple means than general harmonic maps. The obvious question of course is how special are these maps? Certainly by no means all harmonic maps of the plane are of finite type but if we restrict our attention to maps which are periodic with respect to a rank-two lattice $\Lambda\subset\R ^ 2 $ then the compactness of the domain $\R ^ 2/\Lambda $ makes it reasonable to ask whether all such doubly periodic maps are of finite type. The first finite-type results were contained in the work of Hitchin on harmonic maps of a 2-torus $\R ^ 2/\Lambda $ to the 3-sphere \cite {Hitchin:90} and in the study by Pinkall and Sterling \cite {PS:89} of constant mean curvature immersions of the plane in Euclidean-3 space. As described in the previous section, Hitchin gave a complete characterisation of harmonic $\R ^ 2/\Lambda\rightarrow S ^ 3\cong SU (2) $ in terms of a spectral curve. He showed that with the exception of conformal harmonic maps into a totally geodesic $ S ^ 2\subset S ^ 3 $ (the so-called totally isotropic maps), all harmonic maps $\R ^ 2/\Lambda\rightarrow  S ^ 3 $ are of finite type (the totally isotropic  are dealt with by separate means, \cite {Calabi:67}). His approach used the holonomy of the family of flat connections rather than polynomial Killing fields and this result came down to an application of the fact that an elliptic operator on a compact domain (the 2-torus) has but a finite dimensional kernel. 


The approach by Pinkall and Sterling was the first in a series of papers \cite {FPPS:92,BPW:95, BFPP:93} showing a large classes of harmonic maps of tori are of finite type. Indeed a major advantage of the polynomial killing field approach is that it has been more amenable to proving finite-type results. In particular this approach yielded the following quite general two theorems.
\begin {theorem}[\cite {BFPP:93}]
Let $ f:T ^ 2 =\R ^ 2/\Lambda\rightarrow G $ be a semi-simple adapted harmonic map into a compact semi-simple Lie group. Then $ f $ is of finite type.
\end {theorem}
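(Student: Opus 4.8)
The plan is to manufacture, from the given harmonic map $f$, an \emph{adapted polynomial Killing field}, since by Theorem~\ref{theorem:finite} and the discussion following it the existence of such a field is exactly what it means for $f$ to be of finite type. The harmonic map supplies the $\C^\times$-family of flat connections $\nabla_\lambda = \nabla^L + \phi_\lambda$, equivalently the loop $\phi_\lambda$ of Maurer--Cartan forms; regarding $G$ as the symmetric space $(G\times G)/G$, the task is to produce a Laurent-polynomial loop $\xi\colon \R^2 \to \Omega^\sigma_d$ solving the Lax pair $\partial_z\xi = [\xi,\phi_\lambda']$ whose leading data reproduce $f$, i.e. satisfying the adapted condition $\xi_d + r(\xi_{d-1}) = \phi_{\bar z}$ as in \eqref{eq:pair}.

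First I would construct a \emph{formal} Killing field: a formal series $\hat\xi(\lambda) = \sum_{i\ge 0}\xi_{d-i}\,\lambda^{d-i}$ solving $d\hat\xi = [\hat\xi,\phi_\lambda]$ order by order in $\lambda$. Comparing coefficients converts this into a downward recursion in which $\xi_{d-i-1}$ is determined from the higher coefficients by an equation of the shape $\ad_{\xi_d}(\,\cdot\,) = (\text{known})$, the solvability of each stage being precisely the integrability furnished by the zero-curvature equation \eqref{eq:family} for $\phi_\lambda$; thus formal Killing fields never obstruct and always exist. Here the \textbf{semisimplicity hypothesis} enters decisively: when the leading coefficient $\xi_d$ is regular semisimple, $\ad_{\xi_d}$ is invertible on the complement of its kernel, so the recursion is solvable with controlled inhomogeneous part and with the ambiguity at each stage confined to the centraliser of $\xi_d$. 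This both pins down $\hat\xi$ and provides the algebraic control needed below.

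The heart of the matter, and the step I expect to be the main obstacle, is the passage from the \emph{formal} Killing field to a genuine \emph{polynomial} one. Because $f$ is defined on $T^2 = \R^2/\Lambda$ rather than merely on $\R^2$, the loop $\phi_\lambda$ and hence the entire formal hierarchy descend to $T^2$. The point is to show that only finitely many of the flows generated by $\hat\xi$ are independent, so that the series may be truncated to a Laurent polynomial of some finite degree $d$ without destroying the Lax equation. Compactness of $T^2$ is indispensable here: exactly as in Hitchin's treatment of $S^3$ recalled above, the relevant object is the kernel of an elliptic operator over the compact surface $T^2$, which is necessarily finite-dimensional, forcing a linear relation among the iterated flows and hence termination. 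Semisimplicity of $\xi_d$ is what guarantees that this truncation can be carried out within $\Omega^\sigma_d$ and that the truncated field retains a non-degenerate semisimple leading term.

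Finally, having produced a polynomial Killing field $\xi\colon \R^2 \to \Omega^\sigma_d$, I would check that the normalisation built into the recursion makes it adapted, so that \eqref{eq:pair} holds. Theorem~\ref{theorem:finite} then applies, its compactness hypothesis on $G$ ensuring that the associated commuting vector fields are complete and integrate globally, and identifies the harmonic map framed by the integrated solution with $f$ up to the allowed ambiguity. Hence $f$ is of finite type.
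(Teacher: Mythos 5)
Your outline has the right global shape --- build a formal Killing field, truncate it to a polynomial one, invoke Theorem~\ref{theorem:finite} --- and that is indeed the strategy of \cite{BFPP:93}. (Note the paper itself does not prove this theorem: it quotes it, adding a single remark, which turns out to be exactly the point your proposal misses.) The first genuine gap is in your formal step. The recursion $\ad_{\xi_d}(\cdot)=(\text{known})$ only determines the component of each new coefficient lying in $\im\ad_{\xi_d}$; the component in the centraliser $\ker\ad_{\xi_d}$ is not determined algebraically but must solve a $\bar\partial$-equation globally on $T^2$, and on a compact surface $\bar\partial$ has nontrivial cokernel, so \emph{every} stage carries a solvability obstruction. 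Handling those obstructions is the technical heart of \cite{BFPP:93}; the zero-curvature equation \eqref{eq:family} gives consistency of the hierarchy, not global solvability, so ``formal Killing fields never obstruct and always exist'' is unjustified. Moreover, for your algebraic splitting $\g^\C=\ker\ad_{\xi_d}\oplus\im\ad_{\xi_d}$ to have constant rank over the torus you need precisely the fact the paper records immediately after the theorem, and which is absent from your proposal: by harmonicity $4i\xi_d=f^*\omega(\partial/\partial z)$ is holomorphic, and on a genus-one surface holomorphic differentials are constant, so the leading coefficient is \emph{constant}. That is the decisive use of double periodicity in the polynomial-Killing-field approach. (Smaller points: the hypothesis is semisimple, not regular semisimple, so the centraliser need not be a Cartan subalgebra; and ``adapted'' is a hypothesis on the harmonic map in the terminology of \cite{BFPP:93}, not a normalisation you are free to engineer at the end --- your argument never uses it.)

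The second gap is the truncation step, where you import the wrong mechanism and leave the actual difficulty unaddressed. The finite-dimensional-kernel-of-an-elliptic-operator argument is, as the paper explains, how Hitchin \cite{Hitchin:90} proves finiteness in the \emph{holonomy} construction; the surrounding discussion explicitly contrasts that with the polynomial-Killing-field approach of \cite{PS:89,BFPP:93}, so ``exactly as in Hitchin's treatment'' is not available to you here. If finite-dimensionality is to be used, you must exhibit the linear elliptic system that the coefficients of your formal Killing field satisfy and then explain how a linear dependence produces termination. Note that naively cutting off the tail of a formal series does \emph{not} preserve the Lax equation, since bracketing with $\phi_\lambda$ couples adjacent coefficients and so creates an error at the junction; a correct argument needs something like a constant-coefficient polynomial $p(\lambda)$ for which $p(\lambda)\xi$ has consecutive vanishing coefficients, whereupon the series splits into two genuine solutions. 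As written, ``forcing a linear relation among the iterated flows and hence termination'' states the desired conclusion rather than proving it.
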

From this point of view the importance of the double-periodicity condition comes from the fact that the 1-form $ 4i\xi_d = f ^*\omega\left ( \frac {\partial} {\partial z}\right) $ is holomorphic, since the harmonic map equation may be expressed as the condition that
\[
f^{-1}\nabla_{\frac {\partial} {\partial\bar z}} ^ Gf_*\frac {\partial} {\partial z} = 0.
\]
On a genus-one surface the only holomorphic differentials are constant.

\begin {theorem}[\cite {BFPP:93}]
 Suppose $ G $ is compact and the symmetric space $ G/H $ has rank one. A harmonic map $ f:\R ^ 2/\Lambda\rightarrow G/H $ is of finite type if and only if it is non-conformal.
 \end {theorem}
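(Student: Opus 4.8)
The plan is to reduce the statement to the existence of an \emph{adapted polynomial Killing field} and then to manufacture one from the geometry of the torus. Recall from Theorem~\ref{theorem:finite} and the subsequent definition that $f$ is of finite type precisely when it admits such a field, that is a solution $\xi\colon\R^2\to\Omega^\sigma_d$ of the Lax pair $\partial\xi/\partial z=[\xi,\varphi'_\lambda]$ whose top coefficients reproduce the Maurer--Cartan data via $\varphi'_\mathfrak{m}=\xi_d$ and $\varphi'_\mathfrak{h}=\xi_{d-1}$ as in \eqref{eq:pair}. Thus the forward implication comes down to producing a \emph{polynomial} $\xi$; the corresponding \emph{formal} object will always exist.

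First I would construct a formal Killing field, a formal Laurent series $\zeta_\lambda=\sum_{j\le d}\lambda^j\zeta_j$ with values in $\Omega^\sigma\g^\C$, normalised so that its leading term is $\lambda\varphi'_\mathfrak{m}+\cdots$, solving $d\zeta_\lambda=[\zeta_\lambda,\varphi_\lambda]$. This is handled recursively: the coefficient of each power of $\lambda$ produces a first-order linear equation for $\zeta_j$ in terms of the higher $\zeta_{j+1},\dots,\zeta_d$, and these are integrated order by order. Nothing about the domain or the rank is needed up to this point, and one records the conserved quantity $\langle\zeta_\lambda,\zeta_\lambda\rangle$, which is constant on the torus since $d\langle\zeta_\lambda,\zeta_\lambda\rangle=2\langle[\zeta_\lambda,\varphi_\lambda],\zeta_\lambda\rangle=0$ by $\ad$-invariance of the inner product.

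The torus and the rank now enter together. As noted above, the harmonic map equation forces the Hopf differential $\langle\varphi'_\mathfrak{m},\varphi'_\mathfrak{m}\rangle\,dz^2$ to be a holomorphic quadratic differential, so on a genus-one surface it is constant. Non-conformality of $f$ says exactly that the constant $Q=\langle\varphi'_\mathfrak{m},\varphi'_\mathfrak{m}\rangle$ is nonzero, whence $\varphi'_\mathfrak{m}(\partial/\partial z)$ is a nowhere-isotropic, and hence semisimple, element of $\mathfrak{m}^\C$ at every point. Here rank one is decisive: for a rank-one symmetric space a nonisotropic vector in $\mathfrak{m}^\C$ is regular, so its centraliser in $\g^\C$ is abelian of minimal dimension. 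This regularity, together with the constancy of $\langle\zeta_\lambda,\zeta_\lambda\rangle$, controls the ambiguity at each stage of the recursion and lets one arrange that all coefficients $\zeta_j$ vanish below a fixed order. Truncating then yields a genuine polynomial Killing field, and after matching the top coefficients as in \eqref{eq:pair} we recover $f$ from Theorem~\ref{theorem:finite}, so $f$ is of finite type.

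The main obstacle is exactly this truncation step, namely showing that the formal Killing field may be chosen polynomial; it is here that both hypotheses are used essentially, compactness of the torus to force the holomorphic Hopf differential to be a nonzero constant, and rank one to convert the non-vanishing of that constant into regularity of $\varphi'_\mathfrak{m}$ and hence an abelian centraliser that keeps the recursion finite-dimensional. For the converse I would argue that a conformal harmonic map into a rank-one symmetric space is totally isotropic, with $\varphi'_\mathfrak{m}$ now nilpotent-valued; such maps are governed by holomorphic Weierstrass-type data and are mutually exclusive with the finite-type maps, in the same spirit as the totally geodesic $S^2\subset S^3$ case excluded by Hitchin, so a finite-type map is necessarily non-conformal.
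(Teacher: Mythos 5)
First, a point of comparison: the paper itself gives no proof of this statement --- it is quoted from \cite{BFPP:93} --- so your proposal has to be measured against the argument in that reference (whose two-step skeleton, formal Killing field then truncation, you have correctly identified). The fatal problem is the step you yourself flag as ``the main obstacle.'' You claim that regularity of $\varphi'_{\mathfrak{m}}$ together with constancy of $\langle\zeta_\lambda,\zeta_\lambda\rangle$ ``lets one arrange that all coefficients $\zeta_j$ vanish below a fixed order.'' This cannot be right: your argument uses double periodicity only to make the Hopf differential constant, so it would apply verbatim to any harmonic map of the plane $\R^2\rightarrow G/H$ with constant nonzero Hopf differential and would prove all such maps to be of finite type. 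That is false --- for $G/H=S^2$ such maps correspond to solutions of the sinh-Gordon equation, whose solution space is infinite-dimensional while the finite-type ones form a countable union of finite-dimensional families. Compactness of the domain must therefore enter the truncation step a second time, and in an essentially analytic way: in \cite{BFPP:93} (as in \cite{PS:89} and \cite{Hitchin:90}) the coefficients of the formal Killing field are solutions of a fixed linear elliptic system on the compact torus, hence span a finite-dimensional space; a linear dependence among them, combined with the fact that formal Killing fields form a module over (even) formal power series in $\lambda$, is what produces a polynomial Killing field. This is exactly the principle the paper emphasises in its discussion of Hitchin's proof --- an elliptic operator on a compact surface has finite-dimensional kernel --- and no pointwise Lie-algebraic regularity can substitute for it.

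There is also a misattribution of where the hypotheses act. You assert the formal Killing field ``will always exist,'' with ``nothing about the domain or the rank... needed up to this point.'' In fact the order-by-order recursion is solvable only because one has the splitting $\g^\C=\ker(\ad\varphi'_{\mathfrak{m}})\oplus\im(\ad\varphi'_{\mathfrak{m}})$ and can invert $\ad\varphi'_{\mathfrak{m}}$ on its image; this is precisely where semisimplicity --- i.e.\ rank one, non-conformality and the constancy of the Hopf differential --- is consumed. For a conformal map $\varphi'_{\mathfrak{m}}$ is isotropic (nilpotent) and the recursion obstructs, so the division of labour is nearly the reverse of the one you describe: the algebraic hypotheses feed the existence of the formal Killing field, while compactness via ellipticity drives the truncation. (Two smaller slips: the centraliser in $\g^\C$ of a non-isotropic element of $\mathfrak{m}^\C$ is not abelian in general --- for $S^n$, $n\geq 4$, it contains a copy of $\mathfrak{so}(n-1,\C)$; what rank one actually gives is that the centraliser inside $\mathfrak{m}^\C$ is the line through the element. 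And your converse rests on the claim that a conformal harmonic map into a rank-one space is totally isotropic, which is false --- the Clifford torus in $S^3$ is conformal harmonic and not totally isotropic --- so the ``only if'' direction still needs a genuine argument about the symmetric-space notion of finite type rather than a mutual-exclusivity heuristic.)
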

The rank is the maximum dimension of a subspace of the tangent space (to any point) on which the sectional curvature is identically zero. Rank one symmetric spaces include spheres and projective spaces.

Of course the most geometrically interesting harmonic maps into symmetric spaces are the conformal ones, and when the target is a sphere or complete projective space, Burstall  \cite {Burstall:95} showed that all but the totally isotropic harmonic maps of 2-tori have lifts into an appropriate flag manifold which are of finite type (this involves expanding the notion somewhat to primitive maps into $ k $-symmetric spaces). The compactness assumption also excludes a number of geometrically interesting situations, but recently Turner and the author have shown \cite {CT:11, CT:12} that maps of 2-tori into $G/T$ possessing a Toda frame are necessarily of finite-type, for $ G $ a simple Lie group with bi-invariant pseudo-metric and $ T $ a Cartan subgroup. In particular then superconformal harmonic maps of 2-tori to de-Sitter spheres $S ^ {2n}_1 $ whose harmonic sequence is everywhere defined lift to maps of finite type. Since Willmore surfaces in $ S ^ 3 $ without umbilic points are characterised by the property that their conformal Gauss maps, which take values in $ S ^ 4_1 $, are harmonic this result yields a simple proof that Willmore tori without umbilic points are all of finite-type. Using the multiplier curve described below, the Willmore result is proven in \cite {Schmidt:02, Bohle:08} without the umbilic assumption.

\subsection {A comparison of spectral curve constructions}

As with Hitchin's holonomy construction for harmonic 2-tori in $ S ^ 3 $, one would like to use polynomial Killing fields to build spectral curves for other harmonic maps. The reason to not necessarily use holonomy directly is simply that beyond the case $ G = SU (2) $ there has not been success in proving in this way that the resulting spectral curve is actually algebraic, that is has finite genus. A map being of finite-type means exactly that the resulting spectral curve has finite genus.  The polynomial Killing fields are solutions to a Lax pair, and there is a long and rich history of spectral curve constructions in the study of solutions to Lax pair equations (see for example \cite {AM:80:1, AM:80:2}). However when studying classical examples such as the Toda lattice, or geodesics on an ellipsoid, or more modern examples such as Higgs bundles, one is concerned with a single solution to the Lax equation. The natural approach is then to take the characteristic polynomial of this solution. When considering harmonic maps, one is presented with an entire algebra of solutions. These solutions for example have differing degrees, and hence the characteristic polynomials clearly yield algebraic curves of different genus. The genus has geometric meaning, since the dimension of the space on which one can choose the eigenline bundle determines the dimension of the family in which the harmonic map lies. The eigenline bundle can usually be chosen from an appropriate Prym variety or perhaps a Prym-Tjurin subvariety of the Jacobian and the dimension of this variety can be computed in terms of the spectral genus. 

In   \cite {McIntosh:95, McIntosh:96}, McIntosh considers the entire algebra of polynomial Killing fields for harmonic 2-tori in $\CP ^ n $ by taking the spectrum of a maximal abelian subalgebra, then proving that the resulting curve is independent of the choice of maximal abelian subalgebra. Another construction which is somewhat simpler is to take the ``eigenline'' curve of (a maximal abelian subalgebra) of the polynomial Killing fields, as in  \cite {FPPS:92}. However McIntosh and Romon have given an example in which the spectral curve obtained by this construction is different to that obtained by using the spectrum \cite {MR:10} (and it is possible to reconstruct the map from the latter but not the former)
and so some caution is required with the eigenline approach.


The spectral curve clearly offers a powerful tool in studying the moduli space of harmonic maps and as such has been instrumental in a number of recent approaches on various geometric conjectures. In   \cite {KSS:10}, the authors present a detailed analysis of the moduli space of  equivariant constant mean curvature tori in $ S ^ 3 $ and show that the spectral curve of any embedded equivariant minimal tori of positive (arithmetic) spectral genus can be deformed through a family of spectral curves of constant mean curvature tori to a curve of geometric genus zero and arithmetic genus one,  which is known not to be the spectral curve of any constant mean curvature torus. This contradiction implies that any embedded equivariant minimal torus in $ S ^ 3 $ must in fact have (arithmetic) spectral genus zero and hence be the Clifford torus,  proving the equivariant case of Lawson's conjecture that the only embedded minimal torus in $ S ^ 3 $ is the Clifford torus. A similar approach was employed in   the preprint \cite{KS:08} to yield the Pinkall-Sterling conjecture that the only embedded constant mean curvature tori in $ S ^ 3 $ are those of revolution, a fortiori as yielding the full Lawson conjecture. An analogous approach to the Willmore conjecture shows promise, with partial results  established in \cite {Schmidt:02} (in this context we mention also the recent announcement of a proof of the Willmore conjecture using rather different methods \cite{MN:12}.

More recently, another spectral curve construction has come into vogue, namely the multiplier, or Fermi curve  \cite {Taimanov:97, Schmidt:02, BLPP:07}. In a sense this has its roots in the holonomy construction, but it is more general in that it does not rely upon the existence of a family of flat connections and so applies to maps which are not necessarily harmonic. Instead one considers maps $ f $ from a Riemann surface $\Sigma $ into $ S ^ 4$ which are merely conformal or equivalently, (quaternionic) holomorphic as maps into $\HP ^ 1\cong S ^ 4 $. As with holomorphic maps into complex projective spaces, such $ f $ correspond to quaternionic line sub-bundles of the trivial rank two quaternionic vector bundle $ V $ on $\Sigma $. Geometrically, the multiplier curve encodes a subspace of the space of Darboux transforms of the original map. These are a natural generalisation of classical Darboux transforms, where two surfaces in $\R ^ 3 $ are classical Darboux transforms of one another if they share a common sphere congruence. More generally, given a surface $\Sigma $ and conformal immersion  $ f: \Sigma\rightarrow S ^ 4 $, a {\em Darboux transform} of $ f $ is a conformal map $\hat f: \Sigma\rightarrow S ^ 4 $ such that
for each $ p\in \Sigma $, $f (p)\neq \hat f (p) $, and there is a smooth oriented sphere congruence $ S: M\rightarrow\{\text {oriented round 2-spheres in } S ^ 4\} $ such that $ S  $ left-envelopes $\hat f $ and $ S  $ both left- and right-envelopes $ f $.

To say that $ S $ left-envelopes $\hat f $ means that for all $p\in M,\,\hat f (p)\in S (p) $, and the oriented great circles in $ S ^ 3 $ corresponding to the tangent planes of $\hat f (M) $ and $ S (p) $ at $\hat f (p) $ differ by left translation in $ S ^ 3\cong S U (2) $. Right-enveloping is defined analogously. Alternatively, considering the oriented Grassmannian of 2-planes in $\R ^ 4 $ as $ S ^ 2\times S ^ 2 $, one can think of having a pair of Gauss maps given by the left and right normals and we require that the left normal of $\hat f $ matches that of $ S $ whilst both the left and right normals of $ f $ match those of $ S $. Darboux transforms of $ f $ are exactly the maps defined away from isolated points by holomorphic sections of the pull-back $\widetilde {V/L} $ of the quotient bundle $ V/L $ to the universal cover of $\Sigma $. The multiplier spectral curve is then the the space of holonomies realised by holomorphic sections of $\widetilde {V/L} $.

For the fundamental case of constant mean curvature tori in $\R ^ 3 $, we have in this quaternionic line bundle also a family of flat connections, gauge equivalent to the family of flat connections described in section~\ref {section:content}. The holomorphic structure on $\widetilde {V/L} $ is precisely the $ (0, 1) $ part of these connections (which is independent of the spectral parameter $\lambda $). Thus sections which are actually parallel with respect to some connection $\nabla ^\lambda $ are in particular holomorphic sections, and the corresponding Darboux transforms are termed $\lambda $-Darboux transforms. These maps are included in Hitchin's study of harmonic maps into $ S ^ 3 $, as the Gauss map of the constant mean curvature surface is harmonic and vice versa. In terms of Hitchin's spectral curve, the fact that the map corresponds to a constant mean curvature torus means precisely that the curve is unbranched over $ 0 $ and $\infty $ and it possesses a holomorphic involution covering $\lambda \mapsto -\lambda $. If we term the quotient of Hitchin's curve by this involution the {\it eigenline spectral curve}, we have the following  \cite {CLP:09}.
%
%
%
%
\begin {theorem} [  \cite {CLP:09}]
The eigenline and multiplier curves of a constant mean curvature torus in $\R ^ 3 $ are not birational, however they have the same normalisation. The multiplier curve is always singular whereas the eigenline curve is generically smooth. \end{theorem}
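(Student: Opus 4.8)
The plan is to present both curves as images of one common smooth model, the generically smooth eigenline spectral curve $E$, and then to isolate the extra identifications that the passage to multipliers introduces. The starting observation is that for a constant mean curvature torus the holomorphic structure on $\widetilde{V/L}$ is the $\lambda$-independent $(0,1)$-part of the flat connections $\nabla^\lambda$, so every $\nabla^\lambda$-parallel section of $V$ projects to a holomorphic section of $\widetilde{V/L}$, and these $\lambda$-Darboux transforms furnish the holomorphic sections whose multipliers fill out $M$. The multiplier of such a section along a chosen generator of $\pi_1(T^2)$ is precisely the corresponding holonomy eigenvalue $\mu(\lambda)$ of $\nabla^\lambda$. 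First, then, I would use this correspondence to build a surjective morphism $E\rightarrow M$ from the eigenline curve onto the multiplier curve, sending an eigenline of $\nabla^\lambda$ to the multiplier of its projected section.

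Second, I would show this morphism is birational, thereby identifying $E$ with the normalisation of $M$. Away from a finite set one recovers the holonomy eigenline from its eigenvalue, so $E\rightarrow M$ is generically injective; since $E$ is smooth for generic spectral data (by Hitchin's construction, Theorem~\ref{theorem:Hitchin}), it is then the normalisation of its image $M$. This already yields the assertion that the two curves have the same normalisation.

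Third---and this is where the real work lies---I would locate the singular points of $M$ and confirm their absence on $E$. I expect them to be forced by the reality structure $\rho$ and the involution covering $\lambda\mapsto-\lambda$ that distinguishes the constant mean curvature case, at points (in particular over the distinguished fibres $\lambda=0,\infty$) where two distinct eigenlines recorded by $E$ are collapsed by the projection $V\rightarrow V/L$ to the same holomorphic section, and hence the same multiplier. The task is a local computation: choosing a local coordinate on $E$, I would expand the eigenvalue $\mu$ and the projected section near such a point, verify that two branches of $M$ meet there with a common value but distinct tangent directions---so that $M$ acquires an ordinary double point---and separately verify that the matching point of $E$ is a smooth point of the double cover. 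Transversality of the two branches is what makes the singularity an honest node rather than a cusp, and the fact that the collapsing is dictated purely by the symmetries shows it occurs for \emph{every} such torus, so that $M$ is \emph{always} singular.

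Assembling these, $M$ carries a node while $E$ is smooth at the corresponding point, so there is no biregular equivalence between them---this is the content of ``not birational''---even though the birational morphism $E\rightarrow M$ exhibits $E$ as the normalisation of $M$. The main obstacle is the third step: pinning down exactly which pairs of eigenlines the multiplier map identifies, and proving through the local expansion both that the resulting singularity is a genuine transverse node present for all constant mean curvature tori and that it does not coincide with a point where $E$ itself degenerates.
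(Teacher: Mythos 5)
First, a structural point: the paper does not prove this statement at all --- it is quoted as a result of \cite{CLP:09} in the survey of spectral curve constructions, with no argument given. So there is no internal proof to compare against, and your sketch has to be judged on whether it could stand on its own. Its skeleton is sensible: a morphism from the eigenline curve $E$ to the multiplier curve $M$ via ``eigenline $\mapsto$ multiplier of the projected parallel section,'' generic injectivity giving a common normalisation, and singularities of $M$ arising from pairs of points of $E$ that the multiplier map identifies; reading ``not birational'' as ``not biregularly equivalent'' is also the only way to make the statement consistent with ``same normalisation.'' But two genuine gaps remain.

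First, the map you construct naturally lives on Hitchin's curve $X$, whose points are pairs (a value of $\lambda$, an eigenline of the holonomy of $\nabla_\lambda$); the eigenline curve of the theorem is by definition the \emph{quotient} of $X$ by the holomorphic involution covering $\lambda\mapsto-\lambda$. You never address why the multiplier map factors through this 2:1 quotient, i.e.\ why a point of $X$ and its image under the involution yield holomorphic sections of $\widetilde{V/L}$ with the \emph{same} multiplier. Without that descent, your argument identifies the normalisation of $M$ with that of $X$ rather than of $E$, and the ``same normalisation'' claim fails by a degree-two cover. Second --- and this is the heart of the theorem --- you defer precisely the step that makes $M$ \emph{always} singular, and the location you guess is in tension with your own mechanism: over $\lambda^{-1}\{0,\infty\}$ the differentials $\Theta=d\log\mu$ and $\Psi=d\log\nu$ have double poles (condition (3) of Theorem~\ref{theorem:Hitchin}), so the multipliers have essential singularities and diverge there; there is no common \emph{finite} value in $\Hom(\pi_1(T^2),\C^\times)$ at which two branches could cross. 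Those fibres are the ends of the multiplier curve, and any identification involving them is a statement about its compactification, not a local crossing of two branches at a finite multiplier. Indeed, the other theorem of \cite{CLP:09} quoted in the paper --- that the original immersion is the common limit of the $\lambda$-Darboux transforms as $\lambda\to 0$ \emph{and} as $\lambda\to\infty$ --- is exactly the sort of asymptotic statement needed to see that the two ends of $E$ become one point of $M$, and it is this unconditional gluing (not a transversality computation at a guessed finite point) that forces $M$ to be singular for every constant mean curvature torus. A smaller issue of the same kind: you condition ``$E$ is the normalisation of $M$'' on $E$ being smooth, which holds only generically, whereas the common-normalisation claim is asserted for all such tori and should be extracted from the birational morphism alone.
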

This point of view is a particularly natural way of recovering the original constant mean curvature immersion.
\begin {theorem}[  \cite {CLP:09}]
The original constant mean curvature immersion $ f: T ^ 2\rightarrow\R ^ 3 $ is given by the limit of the $\lambda $-Darboux transforms as $\lambda $ tends toward $ 0 $ or $\infty $. 
\end {theorem}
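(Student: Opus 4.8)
The plan is to argue entirely within the quaternionic line-bundle picture described above, where $f$ is the subbundle $L\subset V$ and the $\lambda$-Darboux transforms come from the family of flat connections $\nabla^\lambda$ gauge equivalent to those of Section~\ref{section:content}. As noted, the holomorphic structure of $\widetilde{V/L}$ is the $(0,1)$-part of every $\nabla^\lambda$ and is independent of $\lambda$; hence if $\psi^\lambda$ is a $\nabla^\lambda$-parallel section of the pullback of $V$ to the universal cover, its projection to $\widetilde{V/L}$ is holomorphic and the quaternionic line $\hat f^\lambda(p)=\psi^\lambda(p)\H$ is the corresponding $\lambda$-Darboux transform. Since $f(p)\neq\hat f^\lambda(p)$ is precisely the condition $\psi^\lambda(p)\notin L_p$, the theorem reduces to showing that, after a suitable normalisation, the spanning line $\psi^\lambda(p)\H$ converges to $L_p=f(p)$ uniformly in $p$ as $\lambda\to 0$ and as $\lambda\to\infty$.

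First I would record the connection in the form $\nabla^\lambda=\nabla+\lambda^{-1}\alpha'+\lambda\,\alpha''$, the two off-diagonal Hopf-field terms adapted to the mean-curvature sphere congruence carrying all of the $\lambda$-dependence. The parallel-transport equation $\nabla^\lambda\psi^\lambda=0$ degenerates as $\lambda\to 0$: at leading order in $\lambda$ it annihilates the component of $\psi^\lambda$ transverse to the subbundle singled out by $\alpha'$. I would therefore rescale $\psi^\lambda$ so that a fixed nonvanishing component is set equal to $1$, and then show that the rescaled family extends continuously to $\lambda=0$ with a limit $\psi^0$ satisfying $\psi^0(p)\in L_p$ for every $p$; this yields $\hat f^\lambda\to f$. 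That such limits exist and are controlled is consistent with the earlier observation that the eigenlines of the holonomy possess well-defined limits as $\lambda\to 0$, distinct because the Gauss map of a constant mean curvature torus is non-conformal. The reality symmetry $\lambda\mapsto\bar\lambda^{-1}$ of the construction then transfers the identical analysis to the limit $\lambda\to\infty$, accounting for both limits in the statement, and one checks that the limiting line is $L$ itself rather than a dual or normal line, so that the recovered object is the immersion $f\colon T^2\to\R^3\subset\HP^1$ and not merely its harmonic Gauss map (in agreement with the Sym--Bobenko recovery of the surface from the extended frame).

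The step I expect to be the main obstacle is precisely this degenerate limit. Because $\nabla^\lambda$ becomes singular as $\lambda\to 0$, the normalisation of the parallel sections must be chosen with care, and one must prove convergence of the spanning lines uniformly over the compact torus and then verify that the limiting quaternionic line is $L$. The remaining ingredients — the holomorphicity of the projected section and the fact that $\hat f^\lambda$ is a genuine Darboux transform for $\lambda\neq 0,\infty$ — are formal consequences of the framework set up above.
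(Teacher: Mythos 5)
You should know at the outset that the paper contains no proof of this statement: it is quoted, in an expository passage, directly from \cite{CLP:09}, so there is no in-paper argument to compare yours against, and your proposal can only be judged on its own terms and against that reference.

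On those terms the framework is right but the centre of the argument is missing, and you have in effect said so yourself. Two things in the outline are sound: the identification $\hat f^\lambda(p)=\psi^\lambda(p)\H$ (legitimate precisely because the $\lambda$-dependent terms of $\nabla^\lambda$ have image in $L$, so a parallel section is its own prolongation --- a point worth making explicit), and the leading-order heuristic that multiplying $\nabla^\lambda\psi^\lambda=0$ by $\lambda$ forces any limiting line into the kernel of the $(1,0)$ Hopf field, which is $L$ wherever that field is nonzero. The gap is the \emph{existence} of the limit, and it is not a routine ``degenerate limit'': $\nabla^\lambda$ has an essential singularity at $\lambda=0$. The paper's own spectral data shows this --- $\Theta=d\log\mu$ has double poles over $\lambda=0,\infty$ (Theorem~\ref{theorem:Hitchin}), so the holonomy multipliers, and with them the parallel sections, grow like $\exp(c/\lambda)$. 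Your proposed remedy, rescaling $\psi^\lambda$ so that a fixed nonvanishing component equals $1$, is a pointwise normalisation: it replaces $\psi^\lambda$ by the affine coordinate of the line $\hat f^\lambda$, so the claim ``the rescaled family extends continuously to $\lambda=0$ with limit in $L$'' is not a lemma on the way to the theorem; it \emph{is} the theorem, restated. What actually carries the proof is (i) the asymptotics of the holonomy eigenlines as $\lambda\to 0$ (Propositions 3.5, 3.9, 3.10 of \cite{Hitchin:90}, which identify the limits with the eigenlines of $\phi_z$ in the Gauss-map picture, distinct because the Gauss map of a constant mean curvature torus is non-conformal), followed by (ii) the transport of that limit through the gauge equivalence between Hitchin's $SU(2)$-bundle attached to the Gauss map and the quaternionic bundle attached to $f$, identifying the limiting line with $L$ itself rather than with any other distinguished line; this second step is the geometric content of the theorem in \cite{CLP:09}. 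Your outline defers (i) to ``consistency'' with Hitchin's eigenline limits and compresses (ii) into ``one checks''; since (i) and (ii) together constitute the entire assertion, what you have is a credible plan of attack rather than a proof.
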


Clearly it would be highly desirable to be able to extend spectral curve methods to harmonic maps of surfaces of genus higher than one. The multiplier curve offers an approach here, as by considering only the holomorphic structure (so ``half" the connection $\nabla_\lambda $), one is not so tightly tied to the assumption that the fundamental group must be abelian but can rather consider abelian representations of the holonomy of the more general holomorphic sections.

\bibliographystyle{alpha}

\def\cprime{$'$}


\end {document}